\documentclass[11pt,a4paper]{article}
\usepackage[utf8]{inputenc}
\usepackage{lmodern}
\usepackage[T1]{fontenc}
\usepackage[english]{babel}
\usepackage{ifpdf}
\usepackage[left=1in, right=1in, top=1in, bottom=1in]{geometry}
\usepackage[dvipsnames]{xcolor}
\usepackage[colorlinks=true,linkcolor=teal,citecolor=teal]{hyperref}

\usepackage{graphicx, amsmath, amsthm, amssymb, enumitem, mathrsfs} 
\usepackage{subcaption,caption}
\usepackage{booktabs} 

\usepackage[capitalize]{cleveref}
\usepackage[square,numbers]{natbib}
\usepackage{soul}
\usepackage{multicol}
\usepackage{wrapfig}

\usepackage{tikz}
\usepackage{tikz-3dplot} 
\usepackage{pgfplots, pgfplotstable}
\usetikzlibrary{3d} 
\usetikzlibrary{arrows.meta}

\newcommand{\vol}{{\operatorname{vol}}}

\newcommand{\dmat}{\mathcal{D}}
\newcommand{\rmat}{\mathcal{R}}

\newcommand{\kapdis}{\kappa^{\mathsf{s}}}

\newcommand{\ps}{\mathsf{P}(V)}
\newcommand{\wass}{\mathsf{W}_1}
\newcommand{\kaporc}{\kappa^{\mathsf{or},\alpha}}
\newcommand{\kaplly}{\kappa^{\mathsf{lly}}}
\newcommand{\geodesic}{P^\ast}

\newcommand{\emode}{E}

\newcommand{\dist}[1]{\Delta({#1})}
\newcommand{\degc}[1]{\mathrm{d}({#1})}
\newcommand{\degw}[1]{\mathrm{d}_{w}({#1})}

\newtheorem{theorem}{Theorem}[section]
\newtheorem{corollary}[theorem]{Corollary}
\newtheorem{remark}[theorem]{Remark}
\newtheorem{definition}[theorem]{Definition}
\newtheorem{lemma}[theorem]{Lemma}
\newtheorem{proposition}[theorem]{Proposition}

\usepackage{xcolor, soul}

\hypersetup{
	pdftitle={A Comparative Study of Curvature on Trees},
	pdfauthor={S. J. Robertson}
} 

\title{A Comparative Study of Curvature on Trees}
\author{Sawyer Jack Robertson}
\date{}

\begin{document}
    \captionsetup[figure]{labelfont={bf},labelformat={default},labelsep=period,name={Figure}}
	\maketitle

    \begin{abstract}
        There are several interrelated notions of discrete curvature on graphs. Many approaches utilize the optimal transportation metric on its probability simplex or the distance matrix of the graph. In this survey article, we compute formulas for three different types of curvature on graphs. Along the way, we obtain a comparison result for the curvatures under consideration, a degree-diameter theorem for trees, and a combinatorial identity for certain sums of distances on trees.
    \end{abstract}

    {\footnotesize \textbf{\texttt{Keywords:}} curvature on graphs, trees, optimal transportation, graph distance matrix}

    {\footnotesize \textbf{\texttt{MSC2020:}} 05C05, 05C12, 05C21, 05C10}
    \section{Introduction}

    Discrete notions of curvature on graphs have been the subject of active research for many years. \textit{A priori}, there are deep questions that suggest such a concept would be challenging to construct in general; e.g., whether curvature is a property of each node in a graph, each edge, or both; and, how far from a given node or edge one should study the structure of the ambient graph so as to obtain a suitably local formulation of curvature at each particular place. Despite these challenges, there exist many such formulations which satisfy powerful properties (e.g., eigenvalue estimates, Bonnet-Myers-type theorems). In this article, we focus on two families of discrete curvature which are based, respectively, on the optimal transportation metric and on graph distance matrices.
    
    In the former case, we consider Ollivier's Ricci curvature, defined originally for Markov chains on metric spaces~\cite{ollivier2009ricci} (denoted $\kaporc_{ij}$, see \cref{defn:orc}), and the subsequent modification by Lin, Lu, and Yau in~\cite{lin2011ricci} (denoted $\kaplly_{ij}$, see \cref{defn:lly}). Both of these approaches for curvature on graphs make use of the optimal transportation metric between probability measures related to the simple random walk on a graph and, notably, are defined on each edge in a given graph.
    
    Meanwhile, recent years have seen the emergence of several new approaches to discrete curvature on graphs which utilize the shortest path metric (and other metrics, see~\cite{devriendt2022discrete,devriendt2024graph}) defined on the nodes, including that of Steinerberger in~\cite{steinerberger2023curvature} (denoted $\kapdis_{i}$, see \cref{defn:steiner}). By comparison, this formulation defines curvature as a property of each node in a given graph. Each of these notions of curvature is illustrated on a fixed graph with ten nodes in \cref{fig:curvature-comparison}.

    When the underlying graph is a tree, these notions of curvature admit closed-form formulas which depend only on local properties of the graph. Although many such formulas have appeared in the literature, to our knowledge, they have not yet been collected and compared in a single document. In this article, we remedy this situation by providing formulas for each of these notions of curvature on trees, along with direct comparisons between them. We summarize each of the formulas in \cref{fig:formulas}. In the remainder of this section, we state two comparison theorems along with an application of these formulas in the form of a degree-diameter bound.

    Here, a tree is a finite, simple graph which is connected and acyclic. We say that a graph $T=(V, E)$ is a combinatorial tree if it is a tree with all edge weights equal to one, and we say that a node $i\in V$ (resp. $e\in E$) is a leaf (resp. leaf edge) if it has exactly one neighboring node (resp. is incident to a leaf). 

    \renewcommand{\arraystretch}{1.75}
    \begin{figure}[t!]
        \begin{center}\footnotesize
            \hspace*{-.3cm}\begin{tabular}{|c|c|c|}\hline
                \textbf{Curvature} & \textbf{Formula (unweighted)} & \textbf{Location}\\\hline\hline
                $\kaporc_{ij}$, $\alpha\geq 1/2$ & $\frac{2(1-\alpha)}{\dist{i, j}}\left(1/\degc{i} + 1/\degc{j} - 1\right)$ & {\scriptsize \crefrange{thm:orc-curv}{thm:orc-curv-combinatorial}}\\
                $\kaplly_{ij}$ & $\frac{2}{\dist{i, j}}\left(1/\degc{i} + 1/\degc{j} - 1\right)$ & {\scriptsize \crefrange{thm:lly-curv}{thm:lly-curv-combinatorial}}\\
                $\kapdis_i$ & $\frac{n}{n-1}(2-\degc{i})$ & {\scriptsize \cref{cor:kapd}}\\\hline
            \end{tabular}
        \end{center}
        \caption{This table contains formulas for three different notions of discrete curvature assuming the underlying graph is a tree $T=(V, E)$. Here, we take $i, j\in V$ to be any nodes that are not necessarily adjacent. We use $\degc{i}$ to denote the combinatorial degree of $i$, and we denote by $\dist{i, j}$ the shortest path distance between $i, j\in V$. We omit the formula for Ollivier-Ricci curvature when the laziness parameter $\alpha$ lies in $[0, 1/2)$ because of its length when typeset, see \cref{eq:orc-on-trees}.}\label{fig:formulas}
    \end{figure}

    \begin{theorem}\label{thm:comparison-1}
        Let $T=(V,E)$ be a combinatorial tree with $|V|\ge 2$ and let $\{i,j\}\in E$.
        Fix $\alpha\in[0,1)$. Assume that
            \begin{align*}
                \frac{1}{\degc{i}}+\frac{1}{\degc{j}}\ \le\ \frac{1}{1-\alpha}.
            \end{align*}
        For example, $\alpha\ge\frac{1}{2}$ suffices. Then the Ollivier-Ricci curvature, the Lin-Lu-Yau curvature, and the Steinerberger curvature satisfy
            \begin{align}\label{eq:comparison-1}
                \kaporc_{ij} &= (1-\alpha)\,\kaplly_{ij} = (1-\alpha)\,\frac{n-1}{n}\!\left(\frac{\kapdis_{i}}{\degc{i}}+\frac{\kapdis_{j}}{\degc{j}}\right).
            \end{align}
    \end{theorem}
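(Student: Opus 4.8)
The plan is to deduce the result from the closed-form curvature expressions already recorded in \cref{fig:formulas}; once those are available the statement is a two-line algebraic identity, and the only point requiring thought is verifying that the hypothesis on $\alpha$ is exactly what licenses the simplest Ollivier-Ricci formula.

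First I would use $\{i,j\}\in E$ to record $\dist{i,j}=1$, which removes all distance denominators. Next I would appeal to the Ollivier-Ricci computation on trees from \crefrange{thm:orc-curv}{thm:orc-curv-combinatorial} and the general formula \cref{eq:orc-on-trees}: rewriting the hypothesis as $(1-\alpha)\bigl(\tfrac1{\degc{i}}+\tfrac1{\degc{j}}\bigr)\le 1$, this is precisely the range in which the (otherwise piecewise) tree formula becomes $\kaporc_{ij}=\frac{2(1-\alpha)}{\dist{i,j}}\bigl(\tfrac1{\degc{i}}+\tfrac1{\degc{j}}-1\bigr)=2(1-\alpha)\bigl(\tfrac1{\degc{i}}+\tfrac1{\degc{j}}-1\bigr)$. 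For the parenthetical ``$\alpha\ge\tfrac12$ suffices'' I would note that then $\tfrac1{1-\alpha}\ge 2\ge \tfrac1{\degc{i}}+\tfrac1{\degc{j}}$, the last inequality because every combinatorial degree is at least $1$. Paired with the Lin-Lu-Yau formula $\kaplly_{ij}=\frac{2}{\dist{i,j}}\bigl(\tfrac1{\degc{i}}+\tfrac1{\degc{j}}-1\bigr)=2\bigl(\tfrac1{\degc{i}}+\tfrac1{\degc{j}}-1\bigr)$ from \crefrange{thm:lly-curv}{thm:lly-curv-combinatorial}, this gives $\kaporc_{ij}=(1-\alpha)\,\kaplly_{ij}$ immediately.

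For the second equality I would substitute the Steinerberger formula $\kapdis_{m}=\tfrac{n}{n-1}(2-\degc{m})$ from \cref{cor:kapd} into $\tfrac{n-1}{n}\bigl(\tfrac{\kapdis_{i}}{\degc{i}}+\tfrac{\kapdis_{j}}{\degc{j}}\bigr)$; the factors $\tfrac{n-1}{n}$ and $\tfrac{n}{n-1}$ cancel and leave $\tfrac{2-\degc{i}}{\degc{i}}+\tfrac{2-\degc{j}}{\degc{j}}=2\bigl(\tfrac1{\degc{i}}+\tfrac1{\degc{j}}-1\bigr)=\kaplly_{ij}$. Multiplying by $1-\alpha$ then closes the chain of identities in \eqref{eq:comparison-1}.

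The step I expect to need the most care is the claim that the hypothesis is the correct threshold for the simple Ollivier-Ricci formula. In transport terms, $(1-\alpha)\bigl(\tfrac1{\degc{i}}+\tfrac1{\degc{j}}\bigr)\le1$ is equivalent to each of the random-walk measures $\mu_i^\alpha,\mu_j^\alpha$ placing at least as much mass on its own side of the edge $\{i,j\}$ as the other measure does, so that an optimal coupling moves mass across $\{i,j\}$ in one direction only; this is exactly the configuration behind \cref{eq:orc-on-trees}, and it is what can fail when $\alpha$ is small and both endpoints have small degree. Everything else is bookkeeping, and the degenerate case $|V|=2$ (where $\degc{i}=\degc{j}=1$, $n=2$, and the hypothesis forces $\alpha\ge\tfrac12$) can be checked directly against all three formulas.
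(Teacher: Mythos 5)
Your proposal is correct and follows essentially the same route as the paper's proof: invoke the combinatorial formulas $\kaporc_{ij}=2(1-\alpha)\left(\tfrac{1}{\degc{i}}+\tfrac{1}{\degc{j}}-1\right)$ (valid under the stated hypothesis by \cref{thm:orc-curv-combinatorial}) and $\kaplly_{ij}=2\left(\tfrac{1}{\degc{i}}+\tfrac{1}{\degc{j}}-1\right)$, then substitute $\kapdis_{m}=\tfrac{n}{n-1}(2-\degc{m})$ from \cref{cor:kapd} and cancel. The only additions beyond the paper's argument are your (correct) verification that $\alpha\ge\tfrac12$ implies the hypothesis and the optional transport-theoretic remark, neither of which changes the substance.
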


    We provide a proof of~\cref{thm:comparison-1} in~\cref{sec:comparisons-proof}. Notably, each of these notions of curvature is nonpositive on non-leaves and non-leaf edges; and, with the possible exception of $\kaporc_{ij}$ depending on one's choice of $\alpha$, nonnegative on leaves and their incident edges.

    \begin{theorem}\label{thm:comparison}
        Let $T=(V, E)$ be a combinatorial tree, and assume $|V|\geq 3$. Let $\{i, j\}\in E$ be a fixed edge. 
        \begin{enumerate}[label=(\roman*)]
            \item If $\{i, j\}$ is not a leaf edge and $\alpha\in[0, 1)$, then
                \begin{align*}
                    0\geq \kaporc_{ij} \geq \kaplly_{ij} > \max_{x\in \{i, j\}}\left\{\frac{4}{\degc{x}} \left(\kapdis_{x} -\frac{1}{2} \right)\right\}.
                \end{align*}
            \item If $i$ is a leaf node and $\alpha\in[0, 1)$ is such that $1+\frac{1}{\degc{j}} \leq \frac{1}{1-\alpha}$ (for example, $\alpha\ge\frac{1}{2}$ suffices), then
                \begin{align*}
                    0\leq \kaporc_{ij} \leq \kaplly_{ij} \leq \frac{8}{3}\kapdis_i.
                \end{align*}
        \end{enumerate}
    \end{theorem}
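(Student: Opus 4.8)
The plan is to deduce both parts directly from \cref{thm:comparison-1} together with the closed-form expressions collected in \cref{fig:formulas}, so that everything reduces to elementary inequalities in $\degc{i}$, $\degc{j}$, and $n=|V|$.

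First I would check the hypothesis of \cref{thm:comparison-1} in each case. In part (i), ``$\{i,j\}$ is not a leaf edge'' means precisely $\degc{i}\ge 2$ and $\degc{j}\ge 2$, whence $\frac{1}{\degc{i}}+\frac{1}{\degc{j}}\le 1\le\frac{1}{1-\alpha}$ for every $\alpha\in[0,1)$; in part (ii), with $\degc{i}=1$, the stated assumption $1+\frac{1}{\degc{j}}\le\frac{1}{1-\alpha}$ is literally the hypothesis of \cref{thm:comparison-1}. Hence in both cases $\kaporc_{ij}=(1-\alpha)\,\kaplly_{ij}$ with $1-\alpha\in(0,1]$. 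Since $\kaplly_{ij}=2\bigl(\frac{1}{\degc{i}}+\frac{1}{\degc{j}}-1\bigr)$ for an edge, in (i) this is $\le 0$, so $\kaplly_{ij}\le(1-\alpha)\kaplly_{ij}=\kaporc_{ij}\le 0$; in (ii) it equals $2/\degc{j}>0$, so $0\le\kaporc_{ij}=(1-\alpha)\kaplly_{ij}\le\kaplly_{ij}$. This settles the first two inequalities of each chain.

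It then remains to compare $\kaplly_{ij}$ with the Steinerberger term. Using $\kapdis_{x}=\frac{n}{n-1}(2-\degc{x})$ from \cref{cor:kapd}, a direct manipulation gives $\frac{4}{\degc{x}}\bigl(\kapdis_{x}-\frac{1}{2}\bigr)=\frac{2\,(3n+1-2n\,\degc{x})}{\degc{x}\,(n-1)}$. A non-leaf edge forces $n\ge 4$, so $n-1>0$ and we may clear denominators: for $x=i$ the target inequality $\kaplly_{ij}>\frac{4}{\degc{i}}(\kapdis_{i}-\frac{1}{2})$ becomes, after simplification, $\frac{(n-1)\,\degc{i}}{\degc{j}}+(n+1)\,\degc{i}>2(n+1)$, which holds because $\degc{i}\ge 2$ already makes the second summand at least $2(n+1)$ and the first is strictly positive; the case $x=j$ follows by interchanging $i$ and $j$. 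For part (ii), $\kaplly_{ij}=2/\degc{j}\le 1$ (connectivity and $n\ge 3$ force $\degc{j}\ge 2$) while $\kapdis_{i}=\frac{n}{n-1}>1$, so $\kaplly_{ij}\le 1<\frac{8}{3}<\frac{8}{3}\,\kapdis_{i}$, which is more than required.

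I do not expect a serious obstacle: \cref{thm:comparison-1} does the real work and the rest is bookkeeping. The step that needs care is tracking the direction of the inequalities when multiplying through by $1-\alpha$ and by $\degc{x}(n-1)$---in particular, the sign of $\kaplly_{ij}$ differs between the two parts---and the algebraic rewriting of the Steinerberger term in part (i); that final degree inequality is the only genuine computation, and it is the step I would carry out in full.
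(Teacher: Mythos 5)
Your proposal is correct and follows essentially the same route as the paper: reduce everything to the closed-form expressions for $\kaporc_{ij}$, $\kaplly_{ij}$, and $\kapdis_x$ and finish with elementary degree inequalities (the paper verifies the identity $\kaporc_{ij}=(1-\alpha)\kaplly_{ij}$ directly from \cref{thm:orc-curv-combinatorial} rather than citing \cref{thm:comparison-1}, which is the same content). The only cosmetic difference is in the last inequality of part (i), where the paper inserts the intermediate bound $\kaplly_{ij}>2\left(\tfrac{1}{\degc{x}}-1\right)=\tfrac{2}{\degc{x}}(1-\degc{x})\geq\tfrac{4}{\degc{x}}\left(\kapdis_x-\tfrac{1}{2}\right)$ instead of clearing denominators in $n$; both computations are valid.
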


    We provide a proof of~\cref{thm:comparison} in~\cref{sec:comparisons-proof}. The proof of this result is a straightforward consequence of the formulas we obtain and highlight in the subsequent sections. We remark that, to our knowledge, comparative analyses such as~\cref{thm:comparison-1} and~\cref{thm:comparison} have not yet appeared in the literature.

    A second application of our formulas for discrete curvature on trees takes the form of the following degree-diameter theorem. We define the diameter of a tree to be the maximum shortest path distance between any pair of nodes.

    \begin{theorem}\label{thm:degree-diameter}
        Let $T=(V, E)$ be a combinatorial tree with diameter $D$. Then 
            \begin{align*}
                D \geq\frac{n-1}{\sum_{i\in V}|2-\degc{i}|}.
            \end{align*}
    \end{theorem}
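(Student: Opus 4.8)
The plan is to combine the closed form for the Steinerberger curvature on trees, $\kapdis_j=\tfrac{n}{n-1}\bigl(2-\degc{j}\bigr)$ from \cref{cor:kapd}, with a combinatorial identity for the distance-weighted sum $\sum_{j\in V}\dist{i,j}\,\kapdis_j$, and then read off the diameter bound from a one-line estimate.

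\textbf{Step 1: a combinatorial identity.} I would first show that for \emph{every} fixed $i\in V$,
\begin{align*}
    \sum_{j\in V}\dist{i,j}\bigl(2-\degc{j}\bigr)=n-1 .
\end{align*}
Root $T$ at $i$; then every $j\neq i$ has a unique parent $p(j)$, the assignment $j\mapsto\{j,p(j)\}$ is a bijection from $V\setminus\{i\}$ onto $E$, and $\dist{i,j}=\dist{i,p(j)}+1$. Writing $\sum_{j}\dist{i,j}\,\degc{j}=\sum_{\{a,b\}\in E}\bigl(\dist{i,a}+\dist{i,b}\bigr)$ and using the parent relation on each edge $\{a,p(a)\}$ gives $\dist{i,a}+\dist{i,p(a)}=2\dist{i,a}-1$, hence $\sum_{j}\dist{i,j}\,\degc{j}=\sum_{a\in V\setminus\{i\}}\bigl(2\dist{i,a}-1\bigr)=2\sum_{j\in V}\dist{i,j}-(n-1)$. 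Subtracting this from $2\sum_{j}\dist{i,j}$ yields the identity. (If this identity has already been recorded earlier in the paper — it is the one advertised in the abstract — I would just cite it here.)

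\textbf{Step 2: insert the curvature formula and estimate.} Multiplying the identity by $\tfrac{n}{n-1}$ and using \cref{cor:kapd} gives $\sum_{j\in V}\dist{i,j}\,\kapdis_j=n$ for every $i\in V$. Now fix any $i$ and bound every distance by the diameter:
\begin{align*}
    n=\sum_{j\in V}\dist{i,j}\,\kapdis_j\ \le\ \sum_{j\in V}\dist{i,j}\,\bigl|\kapdis_j\bigr|\ \le\ D\sum_{j\in V}\bigl|\kapdis_j\bigr|\ =\ D\,\frac{n}{n-1}\sum_{j\in V}\bigl|2-\degc{j}\bigr| .
\end{align*}
A tree with $|V|\ge 2$ has at least two leaves, so $\sum_{j}\bigl|2-\degc{j}\bigr|>0$ and we may divide, obtaining $D\ \ge\ (n-1)\big/\sum_{j\in V}\bigl|2-\degc{j}\bigr|$; the degenerate case $|V|=1$ is trivially $0\ge 0$.

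\textbf{Main obstacle.} I do not expect a serious difficulty: the whole argument is short once the identity of Step 1 is in hand. The single point requiring care is that identity — the rooted-tree bookkeeping (the bijection between non-root vertices and edges, and the sign in $\dist{i,a}+\dist{i,p(a)}=2\dist{i,a}-1$) — together with a quick check of the small cases $|V|\in\{1,2\}$, where the diameter and/or the denominator degenerate.
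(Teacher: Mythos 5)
Your proof is correct, and its overall skeleton is the same as the paper's: establish the identity $\sum_{j\in V}\dist{i,j}\bigl(2-\degc{j}\bigr)=n-1$ (equivalently \cref{lem:dist-identity}, since $\vol(G)=2(n-1)$ for a tree), convert it via \cref{cor:kapd} into the statement that $\kapdis$ solves $\mathcal{D}\kapdis=n\mathbf{1}$, and then bound each $\dist{i,j}$ by $D$ — your one-line estimate in Step~2 is exactly the $\ell_1$--$\ell_\infty$ H\"older bound that the paper packages as \cref{thm:reverse-bm}. Where you genuinely diverge is in the proof of the identity itself: the paper spends \cref{sec:distance} on a walk-counting argument (two families of walks $\mathcal{P}$ and $\mathcal{Q}$, the recursions \cref{eq:recursion-p} and \cref{eq:recursion-q}, and an induction on the poset of edges to show $N_e(\mathcal{Q})=N_e(\mathcal{P})-1$), whereas you root the tree at $i$, use the bijection $j\mapsto\{j,p(j)\}$ between $V\setminus\{i\}$ and $E$, and compute $\sum_j\dist{i,j}\degc{j}=\sum_{a\neq i}\bigl(2\dist{i,a}-1\bigr)$ directly. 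Your derivation is shorter and entirely elementary; the paper's walk-counting version is what generalizes cleanly to the weighted setting mentioned in \cref{rmk:steinerberger-weights}, though your parent-edge computation also adapts to weights with little extra effort. All the bookkeeping in your Step~1 checks out (each edge $\{a,b\}$ contributes $\dist{i,a}+\dist{i,b}$ to $\sum_j\dist{i,j}\degc{j}$, and $\dist{i,p(a)}=\dist{i,a}-1$), and your handling of the degenerate cases $|V|\in\{1,2\}$ is fine, so there is no gap.
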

    
    This bound is sharp up to constant factors in both extremal cases of path graphs (where it reads $n-1\geq\frac{n-1}{2}$) and star graphs (with one central node and $n-1$ edges to all surrounding nodes, and in which case it reads $2\geq\frac{1}{2}\frac{n-1}{n-2}$). \cref{thm:degree-diameter} follows immediately from \cref{cor:kapd} and the following Reverse Bonnet Myers-type theorem, which we state below.~\cref{thm:degree-diameter} is similar to the diameter bounds of Lesniak~\cite{lesniak1975longest} (see also~\cite{qiao2022relation}) and agrees with theirs up to leading constants.

    \begin{theorem}[Reverse Bonnet-Myers]\label{thm:reverse-bm}
        Let $G=(V, E, w)$ be a graph with diameter $D$. Assume that the shortest path distance matrix $\mathcal{D}$ of $G$ admits a solution $\kapdis\in\mathbb{R}^n$ to the equation $\mathcal{D}\kapdis = n\mathbf{1}$. Then it holds
            \begin{align*}
                \|\kapdis\|_1 \geq \frac{n}{D}.
            \end{align*}
    \end{theorem}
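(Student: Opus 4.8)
The plan is to exploit the defining equation $\mathcal{D}\kapdis = n\mathbf{1}$ together with the fact that every entry of $\mathcal{D}$ is bounded above by the diameter $D$. Writing $\mathcal{D} = (\Delta(i,j))_{i,j\in V}$, the $i$-th coordinate of the equation reads $\sum_{j\in V}\Delta(i,j)\,\kapdis_j = n$ for every $i\in V$. Summing this identity over all $i\in V$ gives $\sum_{i,j}\Delta(i,j)\,\kapdis_j = n^2$, i.e. $\mathbf{1}^\top \mathcal{D}\,\kapdis = n^2$.

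The key estimate is then $|\mathbf{1}^\top \mathcal{D}\,\kapdis| \le \|\mathcal{D}^\top\mathbf{1}\|_\infty \,\|\kapdis\|_1$. Since $\mathcal{D}$ is symmetric with nonnegative entries bounded by $D$ and zero diagonal, each coordinate of $\mathcal{D}^\top\mathbf{1} = \mathcal{D}\mathbf{1}$ equals $\sum_{j}\Delta(i,j) \le (n-1)D \le nD$; thus $\|\mathcal{D}\mathbf{1}\|_\infty \le nD$. Combining, $n^2 = |\mathbf{1}^\top\mathcal{D}\,\kapdis| \le nD\,\|\kapdis\|_1$, and dividing by $nD$ yields $\|\kapdis\|_1 \ge n/D$, which is the claim.

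Alternatively, and perhaps more cleanly, one can avoid summing over all rows: fix any single $i\in V$ and use $n = \sum_{j}\Delta(i,j)\kapdis_j \le \max_j \Delta(i,j)\cdot\sum_j|\kapdis_j| \le D\|\kapdis\|_1$, since $\Delta(i,j)\le D$ for all $j$. This immediately gives $\|\kapdis\|_1\ge n/D$ with no summation at all. I would present this one-line argument as the main proof and perhaps remark on the averaged version. The statement requires no sign information on $\kapdis$, only the triangle-type bound $\Delta(i,j)\le D$, so no positivity of the distance-curvature vector is needed.

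There is essentially no obstacle here: the only subtlety is making sure the bound $\Delta(i,j)\le D$ is applied correctly (it holds for all pairs, including $i=j$ where $\Delta(i,i)=0\le D$), and noting that the equation $\mathcal{D}\kapdis = n\mathbf{1}$ is assumed to have a solution so the quantity $\|\kapdis\|_1$ is well-defined and finite. The deduction of \cref{thm:degree-diameter} then follows by substituting the tree formula $\kapdis_i = \frac{n}{n-1}(2-\degc{i})$ from \cref{cor:kapd}, which gives $\|\kapdis\|_1 = \frac{n}{n-1}\sum_{i\in V}|2-\degc{i}|$, and rearranging $\|\kapdis\|_1\ge n/D$ into $D \ge \frac{n-1}{\sum_{i\in V}|2-\degc{i}|}$.
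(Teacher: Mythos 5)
Your one-line argument (fix a row $i$ and bound $n=\sum_j\Delta(i,j)\kapdis_j\le \max_j\Delta(i,j)\,\|\kapdis\|_1\le D\|\kapdis\|_1$) is correct and is precisely the paper's proof written out entrywise: the paper phrases it as $\|n\mathbf{1}\|_\infty=\|\mathcal{D}\kapdis\|_\infty\le\|\mathcal{D}\|_{\ell_1\to\ell_\infty}\|\kapdis\|_1$ with $\|\mathcal{D}\|_{\ell_1\to\ell_\infty}=D$. Same approach, nothing further to flag.
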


    \begin{figure}[t!]
        \begin{center}
            \begin{subfigure}[b]{2.5in}
                \begin{center}
                    \begin{tikzpicture}[scale=1.25]
	\node (V00) at (0.0, 0.0)[circle,fill,color=black,inner sep=2pt]{};
	\node (V01) at (2.0, 0.0)[circle,fill,color=black,inner sep=2pt]{};
	\node (V02) at (6.0, 1.0)[circle,fill,color=black,inner sep=2pt]{};
	\node (V03) at (5.0, 0.0)[circle,fill,color=black,inner sep=2pt]{};
	\node (V04) at (3.0, 0.0)[circle,fill,color=black,inner sep=2pt]{};
	\node (V05) at (4.0, 0.0)[circle,fill,color=black,inner sep=2pt]{};
	\node (V06) at (6.0, -1.0)[circle,fill,color=black,inner sep=2pt]{};
	\node (V07) at (1.0, 0.0)[circle,fill,color=black,inner sep=2pt]{};
	\node (V08) at (3.0, 1.0)[circle,fill,color=black,inner sep=2pt]{};
	\node (V09) at (4.0, -1.0)[circle,fill,color=black,inner sep=2pt]{};
	\node[rotate=0.0, rectangle, inner sep = 2pt, draw, fill=white] (LAB0007) at (0.5, 0.0) {\tiny 0.5};
	\draw[thin] (V00) -- (LAB0007) -- (V07);
	\node[rotate=0.0, rectangle, inner sep = 2pt, draw, fill=white] (LAB0701) at (1.5, 0.0) {\tiny 0};
	\draw[thin] (V07) -- (LAB0701) -- (V01);
	\node[rotate=0.0, rectangle, inner sep = 2pt, draw, fill=white] (LAB0104) at (2.5, 0.0) {\tiny -0.17};
	\draw[thin] (V01) -- (LAB0104) -- (V04);
	\node[rotate=-90.0, rectangle, inner sep = 2pt, draw, fill=white] (LAB0408) at (3.0, 0.5) {\tiny 0.33};
	\draw[thin] (V04) -- (LAB0408) -- (V08);
	\node[rotate=0.0, rectangle, inner sep = 2pt, draw, fill=white] (LAB0405) at (3.5, 0.0) {\tiny -0.33};
	\draw[thin] (V04) -- (LAB0405) -- (V05);
	\node[rotate=45.0, rectangle, inner sep = 2pt, draw, fill=white] (LAB0203) at (5.5, 0.5) {\tiny 0.33};
	\draw[thin] (V02) -- (LAB0203) -- (V03);
	\node[rotate=-45.0, rectangle, inner sep = 2pt, draw, fill=white] (LAB0306) at (5.5, -0.5) {\tiny 0.33};
	\draw[thin] (V03) -- (LAB0306) -- (V06);
	\node[rotate=0.0, rectangle, inner sep = 2pt, draw, fill=white] (LAB0305) at (4.5, 0.0) {\tiny -0.33};
	\draw[thin] (V03) -- (LAB0305) -- (V05);
	\node[rotate=-90.0, rectangle, inner sep = 2pt, draw, fill=white] (LAB0509) at (4.0, -0.5) {\tiny 0.33};
	\draw[thin] (V05) -- (LAB0509) -- (V09);
\end{tikzpicture}
                \end{center}
            \end{subfigure}\subcaption{Ollivier-Ricci curvature, with $\alpha=1/2$.}
            \begin{subfigure}[b]{2.5in}
                \begin{center}
                    \begin{tikzpicture}[scale=1.25]
	\node (V00) at (0.0, 0.0)[circle,fill,color=black,inner sep=2pt]{};
	\node (V01) at (2.0, 0.0)[circle,fill,color=black,inner sep=2pt]{};
	\node (V02) at (6.0, 1.0)[circle,fill,color=black,inner sep=2pt]{};
	\node (V03) at (5.0, 0.0)[circle,fill,color=black,inner sep=2pt]{};
	\node (V04) at (3.0, 0.0)[circle,fill,color=black,inner sep=2pt]{};
	\node (V05) at (4.0, 0.0)[circle,fill,color=black,inner sep=2pt]{};
	\node (V06) at (6.0, -1.0)[circle,fill,color=black,inner sep=2pt]{};
	\node (V07) at (1.0, 0.0)[circle,fill,color=black,inner sep=2pt]{};
	\node (V08) at (3.0, 1.0)[circle,fill,color=black,inner sep=2pt]{};
	\node (V09) at (4.0, -1.0)[circle,fill,color=black,inner sep=2pt]{};
	\node[rotate=0.0, rectangle, inner sep = 2pt, draw, fill=white] (LAB0007) at (0.5, 0.0) {\tiny 1.0};
	\draw[thin] (V00) -- (LAB0007) -- (V07);
	\node[rotate=0.0, rectangle, inner sep = 2pt, draw, fill=white] (LAB0701) at (1.5, 0.0) {\tiny 0};
	\draw[thin] (V07) -- (LAB0701) -- (V01);
	\node[rotate=0.0, rectangle, inner sep = 2pt, draw, fill=white] (LAB0104) at (2.5, 0.0) {\tiny -0.33};
	\draw[thin] (V01) -- (LAB0104) -- (V04);
	\node[rotate=-90.0, rectangle, inner sep = 2pt, draw, fill=white] (LAB0408) at (3.0, 0.5) {\tiny 0.67};
	\draw[thin] (V04) -- (LAB0408) -- (V08);
	\node[rotate=0.0, rectangle, inner sep = 2pt, draw, fill=white] (LAB0405) at (3.5, 0.0) {\tiny -0.67};
	\draw[thin] (V04) -- (LAB0405) -- (V05);
	\node[rotate=45.0, rectangle, inner sep = 2pt, draw, fill=white] (LAB0203) at (5.5, 0.5) {\tiny 0.67};
	\draw[thin] (V02) -- (LAB0203) -- (V03);
	\node[rotate=-45.0, rectangle, inner sep = 2pt, draw, fill=white] (LAB0306) at (5.5, -0.5) {\tiny 0.67};
	\draw[thin] (V03) -- (LAB0306) -- (V06);
	\node[rotate=0.0, rectangle, inner sep = 2pt, draw, fill=white] (LAB0305) at (4.5, 0.0) {\tiny -0.67};
	\draw[thin] (V03) -- (LAB0305) -- (V05);
	\node[rotate=-90.0, rectangle, inner sep = 2pt, draw, fill=white] (LAB0509) at (4.0, -0.5) {\tiny 0.67};
	\draw[thin] (V05) -- (LAB0509) -- (V09);
\end{tikzpicture}
                \end{center}
            \end{subfigure}\subcaption{Lin-Lu-Yau curvature.}
            \begin{subfigure}[t]{2.5in}
                \begin{center}
                    \begin{tikzpicture}[scale=1.25]
	\node (V00) at (0.0, 0.0){};
	\node (V01) at (2.0, 0.0){};
	\node (V02) at (6.0, 1.0){};
	\node (V03) at (5.0, 0.0){};
	\node (V04) at (3.0, 0.0){};
	\node (V05) at (4.0, 0.0){};
	\node (V06) at (6.0, -1.0){};
	\node (V07) at (1.0, 0.0){};
	\node (V08) at (3.0, 1.0){};
	\node (V09) at (4.0, -1.0){};
	\draw[thin] (V00) -- (V07);
	\draw[thin] (V07) -- (V01);
	\draw[thin] (V01) -- (V04);
	\draw[thin] (V04) -- (V08);
	\draw[thin] (V04) -- (V05);
	\draw[thin] (V02) -- (V03);
	\draw[thin] (V03) -- (V06);
	\draw[thin] (V03) -- (V05);
	\draw[thin] (V05) -- (V09);
	\node[rectangle, draw, inner sep=2pt, fill=white] at (0.0, 0.0) {\tiny 1.11};
	\node[rectangle, draw, inner sep=2pt, fill=white] at (2.0, 0.0) {\tiny 0};
	\node[rectangle, draw, inner sep=2pt, fill=white] at (6.0, 1.0) {\tiny 1.11};
	\node[rectangle, draw, inner sep=2pt, fill=white] at (5.0, 0.0) {\tiny -1.11};
	\node[rectangle, draw, inner sep=2pt, fill=white] at (3.0, 0.0) {\tiny -1.11};
	\node[rectangle, draw, inner sep=2pt, fill=white] at (4.0, 0.0) {\tiny -1.11};
	\node[rectangle, draw, inner sep=2pt, fill=white] at (6.0, -1.0) {\tiny 1.11};
	\node[rectangle, draw, inner sep=2pt, fill=white] at (1.0, 0.0) {\tiny 0};
	\node[rectangle, draw, inner sep=2pt, fill=white] at (3.0, 1.0) {\tiny 1.11};
	\node[rectangle, draw, inner sep=2pt, fill=white] at (4.0, -1.0) {\tiny 1.11};
\end{tikzpicture}
                \end{center}
            \end{subfigure}\subcaption{Steinerberger curvature.}
        \end{center}
        \caption{An illustration of the three different notions of curvature under consideration. The underlying graph is an unweighted tree on ten nodes.}\label{fig:curvature-comparison}
    \end{figure}
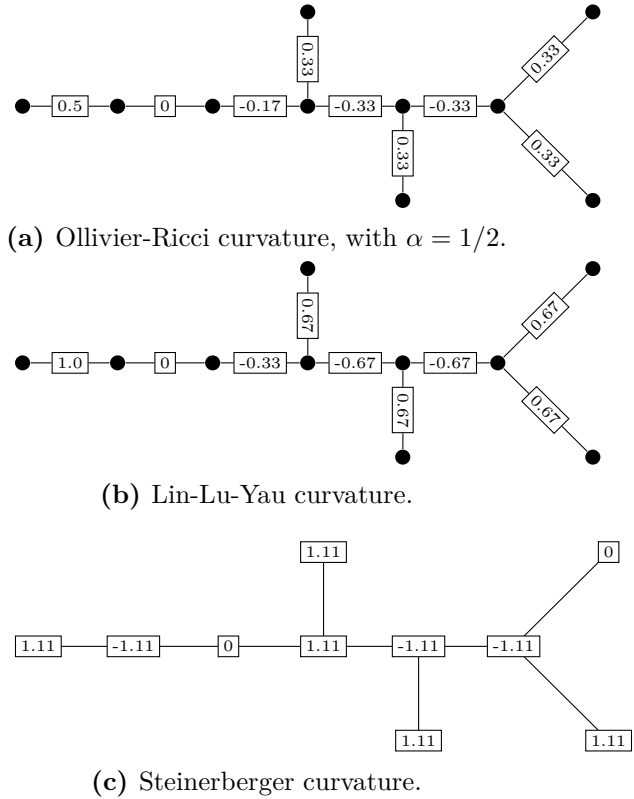

    ~\cref{thm:reverse-bm} is proved in~\cref{sec:comparisons-proof}. We note that this theorem is similar to the reverse Bonnet-Myers theorem in~\cite[Theorem 2]{steinerberger2023curvature}, with two distinctions: first, it is stronger in easing the requirement that the curvature $\kapdis$ of the graph be nonnegative; second, it is weaker in that we assume the existence of a solution to the linear system $\mathcal{D}\kapdis = n\mathbf{1}$, a property which is always satisfied for trees but need not occur in general (this phenomenon is subject to ongoing research by the community, see, e.g.,~\cite{chen2023steinerberger,cushing2024note}).

    Along the way in obtaining the aforementioned results, we prove a combinatorial identity involving sums of distances on graphs (see \cref{lem:dist-identity}).

    \subsection{Related work}

    Trees have regularly made appearances in papers concerning curvature on graphs. Some examples of results concerning Ollivier-Ricci and Lin-Lu-Yau curvatures on trees include the work of Jost and Liu in~\cite{jost2014ollivier} as well as Rubleva~\cite{rubleva2016ricci}. In this paper we opt for a weight-inclusive approach to the lazy random walk measure $m_i^{(\alpha)}$ and thus to our best impression the formulas in~\cref{thm:orc-curv} and~\cref{thm:lly-curv} have not appeared in these exact forms. As for Steinerberger curvature, the case of trees was considered implicitly in~\cite{chen2023steinerberger}, and directly in~\cite{cushing2024note}. In some sense these results follow naturally from historic results which date back to Graham and Lov{\'a}sz~\cite{graham1978distance}. We extend the formulas for curvature on trees to the weighted case. We also include, for the interested reader, a GitHub repository containing a script for calculating these notions of curvature on trees.\footnote{\href{https://github.com/sawyer-jack-1/curvature-on-trees}{https://github.com/sawyer-jack-1/curvature-on-trees}}

    \subsection{Outline of this paper}

    In \cref{sec:notation}, we cover some notation and mathematical background. In \cref{sec:transportation}, we develop formulas for transportation-based notions of curvature on trees. In \cref{sec:distance}, we develop formulas for Steinerberger curvature on trees. Lastly, in \cref{sec:comparisons-proof}, we include the proof of \cref{thm:comparison}.

    \section{Notation and mathematical background}\label{sec:notation}

    We consider graphs of the form $G=(V, E, w)$ where $V = \{1, 2,\dotsc, n\}$ is a set of $n$ nodes, $E\subseteq{V\choose 2}$ is a set of $m$ undirected edges (without loops or multiple edges), and $w = (w_e)_{e\in E}$ is a collection of strictly positive edge weights. We write $w_{ij} = w_{ji} = w_{e}$ interchangeably to refer to the weight of edge $e=\{i, j\}\in E$. We write $i\sim j$ if $\{i, j\}\in E$. We denote by $\degc{i}$ the \emph{degree} of $i\in V$, i.e., the number of edges incident to $i$. We denote by $\degw{i}$ the weighted degree of node $i\in V$:
        \begin{align*}
            \degw{i} &= \sum_{\substack{j\in V \\ j\sim i}} w_{ij}.
        \end{align*}
    Define the volume of the graph by
        \begin{align*}
            \vol(G) &= \sum_{i\in V}\degw{i}.
        \end{align*}
    Note that $\vol(G) = 2\sum_{e\in E}w_e$. We define the \emph{index-oriented edges} $E'$ by the set:
        \begin{align*}
            E' &= \{(i, j) : i,j\in V,\hspace{.1cm}i\sim j,\hspace{.1cm}i<j\},
        \end{align*}
    and the \emph{node-edge oriented incidence matrix} $B\in\mathbb{R}^{n\times m}$ entrywise by the values
        \begin{align}\label{eq:defn-incidence}
            B_{ie_j} &= \begin{cases}
                1 &\text{ if }e_j = (i, \cdot)\\
                -1&\text{ if }e_j = (\cdot, i)\\
                0&\text{ otherwise}
            \end{cases},\hspace{.25cm}i\in V, \hspace{.1cm}e_j\in E'.
        \end{align}
    where $E' = \{e_1,\dotsc, e_m\}$ is any enumeration of the oriented edges. We choose to orient the edges with respect to the indexing on the nodes only for concreteness. It is worth pointing out that $B$ acts on functions $J:E'\rightarrow \mathbb{R}$ by matrix multiplication as follows:
        \begin{align*}
            (BJ)_i &= \sum_{\substack{e\in E'\\ e=(i, \cdot)}} J_e - \sum_{\substack{e\in E'\\ e=(\cdot, i)}} J_e .
        \end{align*}

    A \emph{walk} $P$ is an ordered list of nodes $P = (P_1,\dotsc, P_k)$ such that $P_\ell\in V$ for all $\ell$ and $P_\ell\sim P_{\ell+1}$ for $1\leq \ell \leq k-1$. We say $P$ has length $k-1$. We say $P$ is a \emph{path} (resp. \emph{simple path}) if no node (resp. no node or edge) appears more than once in $P$. $G$ is \emph{connected} if it contains a path between each pair of nodes $i, j\in V$. For $i, j\in V$ we denote by $\dist{i, j}$ the \emph{shortest path distance} between $i, j$, i.e., the (unweighted) length of a shortest path between $i, j$. We denote by $\dmat$ the $n\times n$ matrix with entries given by $\dmat_{ij} = \dist{i, j}$. If $G$ is a tree and $i, j\in V$ the path between $i, j$ of shortest length is unique and simple. We call this path the \emph{geodesic} from $i$ to $j$ and denote it $\geodesic_{ij}$. 

    For a matrix $B\in\mathbb{R}^{n\times k}$, we denote by $B^+\in\mathbb{R}^{k\times n}$ its \emph{Moore-Penrose inverse}~\cite{ben1963contributions}, which is the unique matrix satisfying the following four properties:
        \begin{align*}
            \textit{(i) }BB^+B = B,\hspace{.1cm}\textit{(ii) }B^+BB^+ = B^+,\hspace{.1cm}\textit{(iii) }BB^+\text{ is symmetric, and }\textit{(iv) }B^+B\text{ is symmetric.}
        \end{align*}
    We denote by $\ps$ the simplex of \emph{probability measures} on $V$, that is,
        \begin{align*}
            \ps &= \left\{\mu\in\mathbb{R}^V : \mu_i \geq 0\text{ for each }i\in V,\hspace{.1cm} \mathbf{1}^T\mu = 1\right\}.
        \end{align*} 
    where $\mathbf{1}\in\mathbb{R}^n$ is the vector of all ones. For $\mu,\nu\in\ps$ we define their \emph{optimal transportation distance}, or 1-Wasserstein distance, by the following linear program:
        \begin{align}\label{eq:coupling-form}
            \wass(\mu,\nu)&= \inf\left\{\sum_{i, j\in V} \pi_{ij} \dist{i, j} : \pi\in\mathbb{R}^{n\times n}, \pi_{ij}\geq 0\text{ for each }i, j\hspace{.1cm}\mathbf{1}^T\pi = \mu^T, \pi\mathbf{1} = \nu\right\}.
        \end{align}
    On graphs $\wass(\mu,\nu)$ admits a formulation as a minimum cost flow problem, namely
        \begin{align}\label{eq:flow-form}
            \wass(\mu, \nu) &= \inf \left\{ \sum_{e\in E'} |J_e| : J\in\mathbb{R}^{m},\hspace{.1cm}BJ = \mu - \nu \right\}
        \end{align}
    The equivalence of \cref{eq:coupling-form} and \cref{eq:flow-form} is nontrivial but can be shown with some care, see, e.g.,~\cite[Chapter 6]{peyre2019computational} for a derivation.

    \subsection{Transportation-based curvatures}

    The first two notions of curvature which we consider utilize the optimal transportation metric $\wass$ as applied to Dirac masses (i.e. nodes) and lazy random walk measures.
    
    Before proceeding, we introduce a bit of notation. For $i\in V$ and $\alpha\in[0, 1)$ define the $\alpha$-lazy random walk measure, denoted $m_i^{(\alpha)}\in\ps$ entrywise by the formula
        \begin{align}\label{eq:lazy-rw-measure}
            m_i^{(\alpha)}(x)&=\begin{cases}
                \alpha &\text{ if }x=i,\\
                \frac{(1-\alpha)w_{ix}}{\degw{i}}&\text{ if }x\sim i,\\
                0&\text{ otherwise,}
            \end{cases}
        \end{align}
    for each $x\in V$. The probability measure $m_i^{(\alpha)}$ is the one-step transition measure of the $\alpha$-lazy simple random walk on $G$ subject to the initial condition of starting at node $i$. We remark that with this setup, it is natural to view the weights $w_{ix}$ as affinities or conductances between nodes, as $m_i^{(\alpha)}(x)$ places more mass on adjacent nodes with greater edge weights. We can now define the Ollivier-Ricci and Lin-Lu-Yau curvature on graphs as follows.

    \begin{figure}[t!]\caption{A visualization of the Ollivier-Ricci curvature $\kaporc_{ij}$ at each edge of a tree on 25 nodes, with various choices of $\alpha$. Edge width is proportional to $1-\kaporc_{ij}$.}\label{fig:visualization-orc}
        \begin{center}
            \hspace*{-0.1cm}
            \input{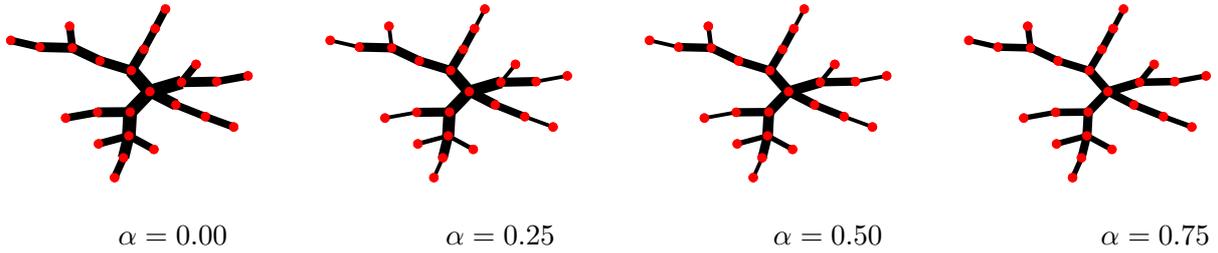}
        \end{center}
    \end{figure}

    \begin{definition}[Ollivier-Ricci curvature~\cite{ollivier2009ricci}]\label{defn:orc}
        Let $G=(V, E, w)$ be any connected graph and $i, j\in V$. We define the {\normalfont Ollivier-Ricci curvature} of $G$ between $i, j$ by the formula
            \begin{align*}
                \kaporc_{ij}&= 1-\frac{\wass(m_i^{(\alpha)}, m_j^{(\alpha)})}{\dist{i, j}}.
            \end{align*}
    \end{definition}

    It is important to note that there is a decent amount of variability from article to article where it concerns the choice of measure $m_i^{(\alpha)}$ with respect to which $\kaporc_{ij}$ is defined.\footnote{In some sense the reason for this is that, in~\cite{ollivier2007ricci}, Ollivier points out that as long as $m_i^{(\alpha)}$ ``depends measurably on $i$,'' it can be used to construct a suitable notion of curvature for a Markov chain on a metric space.} The authors in, e.g.,~\cite{tian2023curvature} opt for a heat kernel-style approach; the authors in~\cite{jost2014ollivier} take the convention $\alpha=0$; and the author of~\cite{rubleva2016ricci} opts to omit the dependence on weights in the definition of $m_i^{(\alpha)}$ and incorporate them solely in the resulting transportation cost. In this paper, our definition follows the conventions of~\cite{Bauer2012ollivier,Bhattacharya2015exact}. We illustrate an example of Ollivier-Ricci curvature in \cref{fig:visualization-orc}.

    In~\cite{lin2011ricci}, the authors Lin, Lu, and Yau proposed an approach to eliminate the dependence on $\alpha$ and proved that the function $\alpha\mapsto\kaporc_{ij}/(1-\alpha)$ is increasing and bounded on $[0, 1)$, and thus admits a limit from the left at $\alpha=1$. We have, based on their approach, the following definition, which is visualized in \cref{fig:visualization-lly}.

    \begin{definition}[Lin-Lu-Yau curvature~\cite{lin2011ricci}]\label{defn:lly}
        Let $G=(V, E, w)$ be any connected graph and $i, j\in V$. We define the {\normalfont Lin-Lu-Yau curvature} of $G$ between $i, j$ by the formula
            \begin{align*}
                \kaplly_{ij}&= \lim_{\alpha\rightarrow 1^-}\frac{\kaporc_{ij}}{1-\alpha}.
            \end{align*}
    \end{definition}

    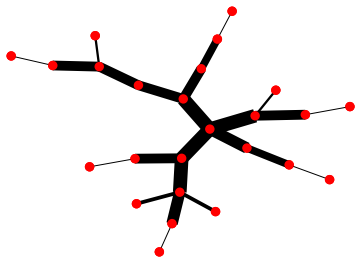
\begin{figure}[h!]\caption{A visualization of the Lin-Lu-Yau curvature $\kaplly_{ij}$ at each edge of a tree on 25 nodes. Edge width is proportional to $1-\kaplly_{ij}$.}\label{fig:visualization-lly}
        \begin{center}
            \hspace*{-0.1cm}
            \begin{tikzpicture}[scale=.5]
	\node (V00) at (4.636, 0.931)[circle,fill,color=red,inner sep=1.25pt]{};
	\node (V01) at (7.716, 2.489)[circle,fill,color=red,inner sep=1.25pt]{};
	\node (V02) at (2.721, 5.094)[circle,fill,color=red,inner sep=1.25pt]{};
	\node (V03) at (2.615, 5.914)[circle,fill,color=red,inner sep=1.25pt]{};
	\node (V04) at (5.828, 5.823)[circle,fill,color=red,inner sep=1.25pt]{};
	\node (V05) at (4.302, 0.18)[circle,fill,color=red,inner sep=1.25pt]{};
	\node (V06) at (5.78, 1.249)[circle,fill,color=red,inner sep=1.25pt]{};
	\node (V07) at (0.405, 5.377)[circle,fill,color=red,inner sep=1.25pt]{};
	\node (V08) at (8.78, 2.097)[circle,fill,color=red,inner sep=1.25pt]{};
	\node (V09) at (4.839, 1.764)[circle,fill,color=red,inner sep=1.25pt]{};
	\node (V10) at (4.931, 4.237)[circle,fill,color=red,inner sep=1.25pt]{};
	\node (V11) at (1.499, 5.124)[circle,fill,color=red,inner sep=1.25pt]{};
	\node (V12) at (7.366, 4.469)[circle,fill,color=red,inner sep=1.25pt]{};
	\node (V13) at (6.606, 2.935)[circle,fill,color=red,inner sep=1.25pt]{};
	\node (V14) at (3.752, 4.6)[circle,fill,color=red,inner sep=1.25pt]{};
	\node (V15) at (4.889, 2.663)[circle,fill,color=red,inner sep=1.25pt]{};
	\node (V16) at (8.142, 3.819)[circle,fill,color=red,inner sep=1.25pt]{};
	\node (V17) at (5.406, 5.038)[circle,fill,color=red,inner sep=1.25pt]{};
	\node (V18) at (2.467, 2.437)[circle,fill,color=red,inner sep=1.25pt]{};
	\node (V19) at (6.216, 6.564)[circle,fill,color=red,inner sep=1.25pt]{};
	\node (V20) at (6.821, 3.791)[circle,fill,color=red,inner sep=1.25pt]{};
	\node (V21) at (5.63, 3.437)[circle,fill,color=red,inner sep=1.25pt]{};
	\node (V22) at (3.663, 2.653)[circle,fill,color=red,inner sep=1.25pt]{};
	\node (V23) at (9.315, 4.034)[circle,fill,color=red,inner sep=1.25pt]{};
	\node (V24) at (3.701, 1.457)[circle,fill,color=red,inner sep=1.25pt]{};
	\draw[thin, line width=0.000mm] (V00.center) -- (V05.center);
	\draw[thin, line width=1.500mm] (V00.center) -- (V09.center);
	\draw[thin, line width=0.000mm] (V01.center) -- (V08.center);
	\draw[thin, line width=1.000mm] (V01.center) -- (V13.center);
	\draw[thin, line width=0.300mm] (V02.center) -- (V03.center);
	\draw[thin, line width=1.300mm] (V02.center) -- (V11.center);
	\draw[thin, line width=1.300mm] (V02.center) -- (V14.center);
	\draw[thin, line width=0.000mm] (V04.center) -- (V19.center);
	\draw[thin, line width=1.000mm] (V04.center) -- (V17.center);
	\draw[thin, line width=0.500mm] (V06.center) -- (V09.center);
	\draw[thin, line width=0.000mm] (V07.center) -- (V11.center);
	\draw[thin, line width=1.800mm] (V09.center) -- (V15.center);
	\draw[thin, line width=0.500mm] (V09.center) -- (V24.center);
	\draw[thin, line width=1.300mm] (V10.center) -- (V14.center);
	\draw[thin, line width=1.300mm] (V10.center) -- (V17.center);
	\draw[thin, line width=1.800mm] (V10.center) -- (V21.center);
	\draw[thin, line width=0.300mm] (V12.center) -- (V20.center);
	\draw[thin, line width=1.500mm] (V13.center) -- (V21.center);
	\draw[thin, line width=1.300mm] (V15.center) -- (V22.center);
	\draw[thin, line width=1.800mm] (V15.center) -- (V21.center);
	\draw[thin, line width=0.000mm] (V16.center) -- (V23.center);
	\draw[thin, line width=1.300mm] (V16.center) -- (V20.center);
	\draw[thin, line width=0.000mm] (V18.center) -- (V22.center);
	\draw[thin, line width=1.800mm] (V20.center) -- (V21.center);
	\node at (4.636, 0.931)[circle,fill,color=red,inner sep=1.25pt]{};
	\node at (7.716, 2.489)[circle,fill,color=red,inner sep=1.25pt]{};
	\node at (2.721, 5.094)[circle,fill,color=red,inner sep=1.25pt]{};
	\node at (2.615, 5.914)[circle,fill,color=red,inner sep=1.25pt]{};
	\node at (5.828, 5.823)[circle,fill,color=red,inner sep=1.25pt]{};
	\node at (4.302, 0.18)[circle,fill,color=red,inner sep=1.25pt]{};
	\node at (5.78, 1.249)[circle,fill,color=red,inner sep=1.25pt]{};
	\node at (0.405, 5.377)[circle,fill,color=red,inner sep=1.25pt]{};
	\node at (8.78, 2.097)[circle,fill,color=red,inner sep=1.25pt]{};
	\node at (4.839, 1.764)[circle,fill,color=red,inner sep=1.25pt]{};
	\node at (4.931, 4.237)[circle,fill,color=red,inner sep=1.25pt]{};
	\node at (1.499, 5.124)[circle,fill,color=red,inner sep=1.25pt]{};
	\node at (7.366, 4.469)[circle,fill,color=red,inner sep=1.25pt]{};
	\node at (6.606, 2.935)[circle,fill,color=red,inner sep=1.25pt]{};
	\node at (3.752, 4.6)[circle,fill,color=red,inner sep=1.25pt]{};
	\node at (4.889, 2.663)[circle,fill,color=red,inner sep=1.25pt]{};
	\node at (8.142, 3.819)[circle,fill,color=red,inner sep=1.25pt]{};
	\node at (5.406, 5.038)[circle,fill,color=red,inner sep=1.25pt]{};
	\node at (2.467, 2.437)[circle,fill,color=red,inner sep=1.25pt]{};
	\node at (6.216, 6.564)[circle,fill,color=red,inner sep=1.25pt]{};
	\node at (6.821, 3.791)[circle,fill,color=red,inner sep=1.25pt]{};
	\node at (5.63, 3.437)[circle,fill,color=red,inner sep=1.25pt]{};
	\node at (3.663, 2.653)[circle,fill,color=red,inner sep=1.25pt]{};
	\node at (9.315, 4.034)[circle,fill,color=red,inner sep=1.25pt]{};
	\node at (3.701, 1.457)[circle,fill,color=red,inner sep=1.25pt]{};
\end{tikzpicture}
        \end{center}
    \end{figure}

    \subsection{Distance-based curvatures}

    In this subsection we introduce the curvature formulation of Steinerberger, defined as follows.

    \begin{definition}[Steinerberger curvature~\cite{steinerberger2023curvature}]\label{defn:steiner}
        Let $G=(V, E, w)$ be any graph and $i\in V$. Let $\kapdis\in\mathbb{R}^n$ be defined by the equation
            \begin{align}\label{eq:steinerberger-defn}
                \kapdis = \dmat^{+}(n\mathbf{1}).
            \end{align}
        For $i\in V$, we define the {\normalfont Steinerberger curvature of $G$ at $i$}, denoted $\kapdis_{i}$, to be given by the $i$-th entry of $\kapdis$.
    \end{definition}

    Ordinarily one must exercise a bit of care in defining $\kapdis$ as the solution to the equation $\mathcal{D}\kapdis = n\mathbf{1}$ since this may not in general admit any solution, and when it does, the solution will in general be nonunique. Therefore in this paper we adopt the usage of $\mathcal{D}^+$ for concreteness. However for trees, the distance matrix $\mathcal{D}$ is nonsingular~\cite{graham1978distance} and thus in fact $\mathcal{D}^+ = \mathcal{D}^{-1}$, so this does not present any difficulties. Note also the lack of dependence of~\cref{eq:steinerberger-defn} on the weights of the underlying graph per our conventional choice of shortest path distance without regard to edge weights. In the tree setting, the weights contribute only a global scaling factor and thus our convention avoids the loss of any generality (see~\cref{rmk:steinerberger-weights}).

    \begin{remark}\normalfont
        We remark at this juncture that two other variants of curvature on graphs, namely that of Devriendt and Lambiotte~\cite{devriendt2022discrete}, as well as that of Devriendt, Ottolini, and Steinerberger~\cite{devriendt2024graph}, are highly related to the Steinerberger curvature which comes into focus in this article. These notions, up to a scaling factor, are constructed in the same manner as $\kapdis_i$ except with the usage of the effective resistance matrix $\rmat$ in lieu of the distance matrix $\dmat$. As it happens, when the underlying graph is a tree, all three of these formulations are identical up to various (positive) scaling factors which depend possibly on the graph and its weights. Therefore, the formulas and comparative results obtained for $\kapdis_i$ may be readily extended to these cases as well.
    \end{remark}

    
\section{Formulas for transportation-based curvatures}\label{sec:transportation}
    In this section we obtain formulas for the Ollivier-Ricci and Lin-Lu-Yau curvature of weighted trees. The main tool which simplifies our treatment greatly is the following lemma, which is well-known (see, e.g., ~\cite{bapat1997moore, robertson2024all} for additional background). We sketch its proof.

    \begin{proposition}\label{prop:tree-beckmann}
		Let $T=(V, E, w)$ be a weighted tree, $\mu,\nu\in\ps$. For an oriented edge $e = (i, j)\in E'$, define $K_\mu\in\mathbb{R}^{E'}$ by
			\[K_\mu(e= (i, j)) = \sum_{k \in V^\ast(i; e)}\mu(k),\]
		where $V^\ast(i; e)\subset V$ is the set of nodes belonging to the subtree with root $i$ obtained from $T$ by removing the edge $e$ (and similarly for $K_\nu$). Then $J=K_\mu-K_\nu$ is the only solution to the linear system $BJ=\mu-\nu$ and it follows that
			\begin{align}
				\mathcal{W}_1(\mu,\nu) &=\sum_{e\in E'} |K_\mu(e)-K_\nu(e)|.
			\end{align} 
	\end{proposition}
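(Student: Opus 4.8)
The plan is to exploit the special structure of a tree: removing any edge $e$ disconnects $T$ into exactly two components, so the incidence matrix $B$ behaves very transparently. First I would verify that $J = K_\mu - K_\nu$ solves $BJ = \mu - \nu$ by a direct computation at an arbitrary node $i$. Fix $i\in V$ and consider $(BJ)_i = \sum_{e = (i,\cdot)} J_e - \sum_{e=(\cdot,i)} J_e$. Using the definition of $K_\mu$, each term $K_\mu(e)$ for an edge $e$ incident to $i$ counts the $\mu$-mass on the side of $e$ that contains the appropriate endpoint. A telescoping/inclusion argument then shows that $(BK_\mu)_i$ collapses to a single contribution at $i$: summing over all edges incident to $i$, every node $k\ne i$ is counted with a net coefficient of zero (it lies on the ``far'' side of exactly the edges incident to $i$ that point away from it, and these cancel in the signed sum), while $k=i$ is counted exactly once. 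Hence $(BK_\mu)_i = \mu(i)$, and likewise $(BK_\nu)_i = \nu(i)$, giving $BJ = \mu - \nu$.

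Next I would establish uniqueness. Since $T$ is a tree on $n$ nodes, it has $m = n-1$ edges, and the incidence matrix $B\in\mathbb{R}^{n\times(n-1)}$ has rank $n-1$ (the kernel of $B^{T}$ is spanned by $\mathbf{1}$ because $T$ is connected, so $\operatorname{rank} B = n-1$). Therefore $B$ has trivial kernel as a map $\mathbb{R}^{m}\to\mathbb{R}^{n}$, and the solution to $BJ = \mu-\nu$ — which exists since $\mu-\nu\perp\mathbf{1}$ lies in the column space of $B$ — is unique. Combined with the previous paragraph, $J = K_\mu - K_\nu$ is that unique solution.

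Finally, for the Wasserstein identity I would invoke the flow formulation~\eqref{eq:flow-form}: $\wass(\mu,\nu) = \inf\{\sum_{e\in E'}|J_e| : BJ = \mu-\nu\}$. Because the feasible set for $J$ is the single point $K_\mu - K_\nu$, the infimum is attained trivially and equals $\sum_{e\in E'}|K_\mu(e) - K_\nu(e)|$, which is the claimed formula. I expect the main (though still modest) obstacle to be writing the telescoping cancellation in the first step cleanly — one has to argue carefully, using the rooted-subtree description $V^{\ast}(i;e)$, that for $k\ne i$ the signed sum of the indicator ``$k\in V^{\ast}(\cdot;e)$'' over edges $e$ incident to $i$ vanishes; this is where the acyclicity of $T$ is essential, since it guarantees that the geodesic from $i$ to $k$ leaves $i$ through exactly one edge. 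Everything else is bookkeeping plus the two standard facts (rank of a tree's incidence matrix, and the flow form of $\wass$) already available in the excerpt.
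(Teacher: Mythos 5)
Your overall strategy is exactly the paper's: the kernel of $B^{T}$ consists of the constants, so by rank--nullity $B$ has trivial kernel, the system $BJ=\mu-\nu$ has a unique solution, and since the feasible set of the flow program \eqref{eq:flow-form} is a single point the Wasserstein identity is immediate. The paper leaves the verification that $K_\mu-K_\nu$ is a solution as ``straightforward''; you attempt to fill it in, and that is where there is a genuine error. Your claim that $(BK_\mu)_i=\mu(i)$ is false. With the paper's conventions, an out-edge $e=(i,\cdot)$ contributes $K_\mu(e)=\mu\bigl(V^{\ast}(i;e)\bigr)$ (the mass on the \emph{near} side of $e$), while an in-edge $e=(j,i)$ contributes $-K_\mu(e)=-\mu\bigl(V^{\ast}(j;e)\bigr)$ (minus the mass on the \emph{far} side). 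Tracking coefficients, a node $k\neq i$ is on the far side of exactly one edge incident to $i$ and on the near side of all the others, so it appears in $(BK_\mu)_i$ with net coefficient $d_{\mathrm{out}}(i)-1$, not $0$; and $i$ itself appears with coefficient $d_{\mathrm{out}}(i)$, not $1$. Using $\mathbf{1}^{T}\mu=1$ this gives
\begin{align*}
(BK_\mu)_i \;=\; \mu(i) + d_{\mathrm{out}}(i) - 1,
\end{align*}
which already fails on the two-node path at the node with no outgoing edge, where $(BK_\mu)_i=\mu(i)-1$.

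The result you actually need survives because the offending term is independent of the measure: $(B(K_\mu-K_\nu))_i=\bigl(\mu(i)+d_{\mathrm{out}}(i)-1\bigr)-\bigl(\nu(i)+d_{\mathrm{out}}(i)-1\bigr)=\mu(i)-\nu(i)$. So the correct statement is that $BK_\mu=\mu$ holds only up to an additive vector depending on the chosen orientation, and the cancellation that rescues the argument uses crucially that $\mu$ and $\nu$ have equal total mass. You should carry out the telescoping for the difference $K_\mu-K_\nu$ directly (or, equivalently, note that $[\mu-\nu]\bigl(V^{\ast}(i;e)\bigr)$ summed over all edges incident to $i$ equals $\mu(i)-\nu(i)$ because $\mu-\nu$ has total mass zero), rather than for $K_\mu$ and $K_\nu$ separately. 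The remainder of your argument (uniqueness and the reduction of the infimum to a single feasible point) is correct and identical to the paper's.
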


    \begin{proof}
        This follows from a bit of linear algebra. The kernel of $B^T$ is exactly the constant vectors (since, in particular, $T$ is connected) and therefore the range of $B$ is exactly the set of mean-zero vectors. By the rank-nullity theorem, $B$ has trivial kernel and thus the equation $BJ = \mu-\nu$ has exactly one solution in the variable $J$. It suffices to check that $K_\mu-K_\nu$ forms such a solution, which is straightforward.
    \end{proof}

    This fact leads to the real heart of the matter, which is included in the following theorem.

    \begin{theorem}\label{thm:wass-on-trees}
        Let $T=(V, E, w)$ be a weighted tree and let $i,j\in V$ be fixed. We have that $\wass(m^{(\alpha)}_i, m^{(\alpha)}_j)$ can be expressed by the following formulas. First, if $\{i, j\}\in E$, we have
            \begin{align*}
                \wass(m^{(\alpha)}_i, m^{(\alpha)}_j) &= \begin{cases}
                    \sum_{\substack{x\sim i \\ x\neq j}}\frac{w_{ix}(1-\alpha)}{\degw{i}} + \sum_{\substack{y\sim j \\ y\neq i}}\frac{w_{jy}(1-\alpha)}{\degw{j}} \\
                    \quad\quad + |1-(1-\alpha)w_{ij}\left(1/\degw{i} + 1/\degw{j}\right)| &\text{ if } \alpha\in[0, 1/2)\\
                    1 + (1-\alpha)\left(\sum_{x\sim i}\frac{w_{ix} \sigma_{ix}}{\degw{i}} + \sum_{y\sim j}\frac{w_{jy} \sigma_{jy}}{\degw{j}} \right) &\text{ if }\alpha\geq 1/2
                \end{cases}.
            \end{align*}
        If $i\nsim j$, we have
            \begin{align*}
                \wass(m^{(\alpha)}_i, m^{(\alpha)}_j) &= \dist{i, j} + (1-\alpha)\left(\sum_{x\sim i}\frac{w_{ix} \sigma_{ix}}{\degw{i}} + \sum_{y\sim j}\frac{w_{jy}  \sigma_{jy}}{\degw{j}} \right),
            \end{align*}
        where for brevity in both cases we write $\sigma_{ix} = -1$ if $\{i, x\}$ is on the geodesic $\geodesic_{ij}$ and $\sigma_{ix} = 1$ otherwise.
    \end{theorem}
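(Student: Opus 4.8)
The plan is to invoke \cref{prop:tree-beckmann} with $\mu=m_i^{(\alpha)}$ and $\nu=m_j^{(\alpha)}$, which reduces the problem to evaluating $|K_{m_i^{(\alpha)}}(e)-K_{m_j^{(\alpha)}}(e)|$ for each oriented edge $e\in E'$ and summing over $e$. The choice of orientation is irrelevant: reversing $e$ replaces $K_\mu(e)$ by $1-K_\mu(e)$ for any probability measure $\mu$, leaving the absolute difference unchanged. Since $m_i^{(\alpha)}$ is supported on $i$ together with its neighbors and $m_j^{(\alpha)}$ on $j$ together with its neighbors, I will organize the edges of $T$ into four families according to their position relative to $i$, $j$, and the geodesic $\geodesic_{ij}=(i=p_0,p_1,\dots,p_d=j)$, $d=\dist{i,j}$: (i) edges incident to $i$ other than $\{p_0,p_1\}$; (ii) edges incident to $j$ other than $\{p_{d-1},p_d\}$; (iii) the $d$ edges of $\geodesic_{ij}$; and (iv) everything else.

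The first step is to show family (iv) contributes nothing. If an edge $e$ is incident to neither $i$ nor $j$, then deleting it cannot separate $i$ from any of its neighbors (the unique $i$-to-neighbor path is a single edge), so all of $\supp m_i^{(\alpha)}$ lies in one component of $T-e$, and likewise for $\supp m_j^{(\alpha)}$; and if moreover $e$ is not on $\geodesic_{ij}$ then $i$ and $j$ lie in the same component, so $K_{m_i^{(\alpha)}}(e)$ and $K_{m_j^{(\alpha)}}(e)$ are both $0$ or both $1$ and the contribution is $0$. For family (i), an edge $\{i,x\}$ with $x\sim i$ off the geodesic cuts off the subtree rooted at $x$, which contains the single support point $x$ of $m_i^{(\alpha)}$, of mass $(1-\alpha)w_{ix}/\degw{i}$, and no support point of $m_j^{(\alpha)}$; hence it contributes $(1-\alpha)w_{ix}/\degw{i}$, and family (ii) is symmetric. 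For family (iii) with $d\ge2$: deleting the end edge $\{p_0,p_1\}$ cuts off the side carrying all of $\supp m_j^{(\alpha)}$ but only the support point $p_1$ of $m_i^{(\alpha)}$, giving $|1-(1-\alpha)w_{ip_1}/\degw{i}|$; the other end edge $\{p_{d-1},p_d\}$ is symmetric; and each interior edge $\{p_\ell,p_{\ell+1}\}$ with $1\le\ell\le d-2$ separates all of $\supp m_i^{(\alpha)}$ from all of $\supp m_j^{(\alpha)}$, contributing $1$ (there are $d-2$ of these). In the adjacent case $d=1$, family (iii) is the single edge $\{i,j\}$, whose $i$-side carries $m_i^{(\alpha)}$-mass $\alpha+(1-\alpha)(\degw{i}-w_{ij})/\degw{i}=1-(1-\alpha)w_{ij}/\degw{i}$ and $m_j^{(\alpha)}$-mass $(1-\alpha)w_{ij}/\degw{j}$, yielding $|1-(1-\alpha)w_{ij}(1/\degw{i}+1/\degw{j})|$.

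Summing these contributions proves the adjacent-case formula directly (with the absolute value), and the non-adjacent formula after one cosmetic rewrite: since $\sigma_{ix}=-1$ for exactly the one geodesic edge $\{p_0,p_1\}$ at $i$ and $\sigma_{ix}=+1$ otherwise, the family-(i) sum together with the term $-(1-\alpha)w_{ip_1}/\degw{i}$ from the end edge equals $(1-\alpha)\sum_{x\sim i}w_{ix}\sigma_{ix}/\degw{i}$, and symmetrically at $j$; adding the two $1$'s from the end-edge contributions and the $d-2$ interior edges recovers the $\dist{i,j}$ term. Finally, when $\alpha\ge\tfrac12$ one has $(1-\alpha)w_{ij}(1/\degw{i}+1/\degw{j})\le 2(1-\alpha)\le 1$ (using $w_{ij}\le\degw{i}$, $w_{ij}\le\degw{j}$ since all weights are positive), so the absolute value in the adjacent case disappears and rearranges into the same $\sigma$-form. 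I expect the only genuine difficulty to be the careful bookkeeping of which component of $T-e$ contains which portion of the two supports, together with the degenerate cases $d=1$ and $d=2$ (where the interior-edge family is empty) and the leaf cases where $\degw{i}=w_{ij}$; none of these should cause real trouble once the geodesic characterization of separating edges is in hand.
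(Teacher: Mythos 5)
Your proposal is correct and follows essentially the same route as the paper: both invoke \cref{prop:tree-beckmann} to reduce $\wass(m_i^{(\alpha)},m_j^{(\alpha)})$ to the edge-by-edge flow $|K_{m_i^{(\alpha)}}(e)-K_{m_j^{(\alpha)}}(e)|$ and then evaluate it on the neighborhoods of $i$ and $j$ and along the geodesic, with the same sign analysis of the end-edge terms. Your explicit verification that edges far from $i$, $j$, and $\geodesic_{ij}$ contribute zero, and that the orientation choice is immaterial, only makes more careful two points the paper leaves to ``upon inspection.''
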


    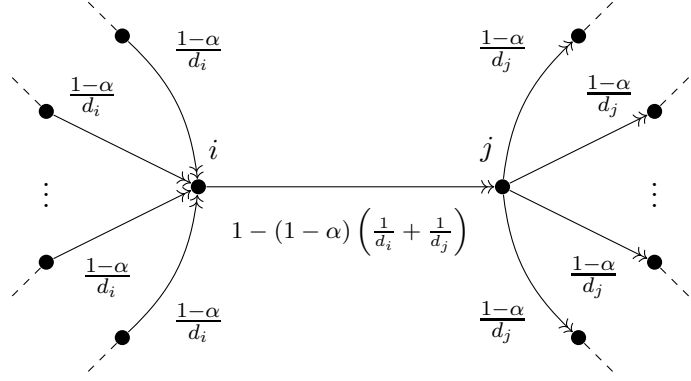
\begin{figure}[t!]  
        \begin{center}
            \begin{tikzpicture}[scale=2]
                \node (1) at  (-2, 0)[circle,fill,color=black,inner sep=2pt]{};
                \node (4) at  (0, 0)[circle,fill,color=black,inner sep=2pt]{};
                
                \node (B1) at  (-2.5, 1)[circle,fill,color=black,inner sep=2pt]{};
                \node (B2) at  (-3, .5)[circle,fill,color=black,inner sep=2pt]{};
                \node (B3) at  (-3, -.5)[circle,fill,color=black,inner sep=2pt]{};
                \node (B4) at  (-2.5, -1)[circle,fill,color=black,inner sep=2pt]{};

                \node (C1) at  (0.5, 1)[circle,fill,color=black,inner sep=2pt]{};
                \node (C2) at  (1, .5)[circle,fill,color=black,inner sep=2pt]{};
                \node (C3) at  (1, -.5)[circle,fill,color=black,inner sep=2pt]{};
                \node (C4) at  (0.5, -1)[circle,fill,color=black,inner sep=2pt]{};

                \draw[color=black, ->>] (1) to (4) node[right]{};

                \node at (-1.9, 0.25) {$i$};
                \node at (-0.1, 0.25) {$j$};

                \draw[color=black, ->>, bend left = 20] (B1) to (1) node[right]{};
                \draw[color=black, ->>] (B2) to (1) node[right]{};
                \draw[color=black, ->>] (B3) to (1) node[right]{};
                \draw[color=black, ->>, bend right = 20] (B4) to (1) node[right]{};
                \node at (-3, 0) {$\vdots$};

                \draw[color=black, <<-, bend right = 20] (C1) to (4) node[right]{};
                \draw[color=black, <<-] (C2) to (4) node[right]{};
                \draw[color=black, <<-] (C3) to (4) node[right]{};
                \draw[color=black, <<-, bend left = 20] (C4) to (4) node[right]{};
                \node at (1, 0) {$\vdots$};

                \node at (-2, .9) {$\frac{1-\alpha}{\degc{i}}$};
                \node at (-2.7, .6) {$\frac{1-\alpha}{\degc{i}}$};
                \node at (-2, -.9) {$\frac{1-\alpha}{\degc{i}}$};
                \node at (-2.6, -.6) {$\frac{1-\alpha}{\degc{i}}$};

                \node at (-1, -0.3) {{\footnotesize $1-(1-\alpha)\left(\frac{1}{\degc{i}}+\frac{1}{\degc{j}}\right)$}};

                \node at (0, .9) {$\frac{1-\alpha}{\degc{j}}$};
                \node at (0.7, .6) {$\frac{1-\alpha}{\degc{j}}$};
                \node at (0, -.9) {$\frac{1-\alpha}{\degc{j}}$};
                \node at (0.6, -.6) {$\frac{1-\alpha}{\degc{j}}$};

                \draw[dashed] (B1) -- +(-0.25, 0.25);
                \draw[dashed] (B2) -- +(-0.25, 0.25);
                \draw[dashed] (B3) -- +(-0.25, -0.25);
                \draw[dashed] (B4) -- +(-0.25, -0.25);

                \draw[dashed] (C1) -- +(0.25, 0.25);
                \draw[dashed] (C2) -- +(0.25, 0.25);
                \draw[dashed] (C3) -- +(0.25, -0.25);
                \draw[dashed] (C4) -- +(0.25, -0.25);

            \end{tikzpicture}
        \end{center}        

        \caption{An illustration of the feasible flow between $m^{(\alpha)}_i$ and $m^{(\alpha)}_j$ in the case where $i, j$ are adjacent. For simplicity we take the tree to be unweighted. The nodes $i, j$ are rendered left and right of center, respectively, and all other labels indicate edge flow values. Orientations are indicated locally with arrows, and all other orientations on edges are immaterial.}\label{fig:tree-transport-illustration-adjacent}
    \end{figure}

    \begin{proof}
        Using \cref{prop:tree-beckmann}, it suffices to calculate the different $K_{m^{(\alpha)}_i} - K_{m^{(\alpha)}_j}$ at each edge and then evaluate its weighted 1-norm. Let the unique simple path from $i$ to $j$ be given by an ordered list of nodes
            \begin{align}\label{eq:geodesic-ij}
                \geodesic_{ij} = (i=i_1,i_2,\dotsc, i_{k}=j),
            \end{align}
        where $\dist{i, j} = k-1$. Let $N^\ast(i)\subseteq E'$ denote the subset of edges incident to $i$ which do not lie on $\geodesic_{ij}$ and similarly for $N^\ast(j)$. Without loss of generality, take the orientation $E'$ on the edges to have the following form. For each $e\in N^\ast(i)$ we take $e$ to be written $e=(\cdot, i)$; for each $e\in N^\ast(j)$ we take $e$ to be written $e=(j, \cdot)$, and for each $e\in \geodesic_{ij}$ we let $e$ be oriented according to the index appearance in \cref{eq:geodesic-ij}. All other edges can have arbitrary orientation and any such choice will not affect the calculations to follow. Having set things up thusly, we can separate the calculations into two cases-- that of adjacency and otherwise.

        \textbf{(Case $i\sim j$)} Assuming $\{i, j\}\in E$, we have that upon inspection $K_{m^{(\alpha)}_i} - K_{m^{(\alpha)}_j}$ satisfies
            \begin{align*}
                K_{m^{(\alpha)}_i}(e) - K_{m^{(\alpha)}_j}(e) &= \begin{cases}
                    \frac{w_e(1-\alpha)}{\degw{i}} &\text{ if } e\in N^\ast(i)\\
                    1-(1-\alpha)w_{ij}\left(1/\degw{i} + 1/\degw{j}\right) &\text{ if } e=(i, j)\\
                    \frac{w_e(1-\alpha)}{\degw{j}} &\text{ if } e\in N^\ast(j)\\
                \end{cases}.
            \end{align*}
        See \cref{fig:tree-transport-illustration-adjacent} for an illustration. Therefore we have
            \begin{align*}
                \wass(m^{(\alpha)}_i, m^{(\alpha)}_j) &= \sum_{x\in N^\ast(i)}\frac{w_{ix}(1-\alpha)}{\degw{i}} + \sum_{y\in N^\ast(j)}\frac{w_{jy}(1-\alpha)}{\degw{j}}
                \\
                &\quad\quad + |1-(1-\alpha)w_{ij}\left(1/\degw{i} + 1/\degw{j}\right)|.
            \end{align*}
        The rightmost term will in general be sensitive to the interplay between the weights on the edges near $i, j$ and one's choice of $\alpha$, but by noting first that $1-(1-\alpha)w_{ij}\left(1/\degw{i} + 1/\degw{j}\right) \geq 0$ provided
            \begin{align*}
               w_{ij}\left(1/\degw{i} + 1/\degw{j}\right) &\leq \frac{1}{1-\alpha},
            \end{align*}
        we have that since $ w_{ij}\left(1/\degw{i} + 1/\degw{j}\right) \leq 2$ always, $\alpha\geq 1/2$ is a sufficient condition for\newline $1-(1-\alpha)w_{ij}\left(1/\degw{i} + 1/\degw{j}\right) \geq 0$. Whenever this is the case, it follows that
            \begin{align*}
                \wass(m^{(\alpha)}_i, m^{(\alpha)}_j) &= \sum_{x\in N^\ast(i)}\frac{w_{ix}(1-\alpha)}{\degw{i}} + \sum_{y\in N^\ast(j)}\frac{w_{jy}(1-\alpha)}{\degw{j}}
                \\
                &\quad\quad + (1-(1-\alpha)w_{ij}\left(1/\degw{i} + 1/\degw{j}\right))\\
                &=1 + (1-\alpha)\left(\sum_{x\sim i}\frac{w_{ix} \sigma_{ix}}{\degw{i}} + \sum_{y\sim j}\frac{w_{jy} \sigma_{jy}}{\degw{j}} \right)
            \end{align*}
            where for brevity we write $\sigma_{ix} = -1$ if $\{i, x\}$ is on the geodesic $\geodesic_{ij}$ and $\sigma_{ix} = 1$ otherwise.

        \textbf{(Case $i\nsim j$)} As before but now assuming $\{i, j\}\notin E$, write down the geodesic $\geodesic_{ij}$ in the following enumerated form
            \begin{align*}
                \geodesic_{ij} &= (i = i_1, i_2,\dotsc, i_{k} = j).
            \end{align*}
        Then we have that upon inspection $K_{m^{(\alpha)}_i} - K_{m^{(\alpha)}_j}$ satisfies
            \begin{align*}
                K_{m^{(\alpha)}_i}(e) - K_{m^{(\alpha)}_j}(e) &= \begin{cases}
                    \frac{w_e(1-\alpha)}{\degw{i}} &\text{ if } e\in N^\ast(i)\\
                    1-(1-\alpha)w_{ij}/\degw{i} &\text{ if } e=(i=i_1, i_2)\\
                    1 &\text{ if } e=(i_{\ell \geq 2}, i_{\ell+1})\\
                    1-(1-\alpha)w_{ij}/\degw{j} &\text{ if } e=(i_{k-1}, i_k=j)\\
                    \frac{w_e(1-\alpha)}{\degw{j}} &\text{ if } e\in N^\ast(j)\\
                \end{cases}.
            \end{align*}
        See \cref{fig:tree-transport-illustration-nonadjacent} for an illustration. Therefore we have that 
            \begin{align*}
                \wass(m^{(\alpha)}_i, m^{(\alpha)}_j) &=  \sum_{x\in N^\ast(i)}\frac{w_{ix}(1-\alpha)}{\degw{i}} + \sum_{y\in N^\ast(j)}\frac{w_{jy}(1-\alpha)}{\degw{j}}
                \\
                &\quad\quad+|1-(1-\alpha)w_{i, i_2}/\degw{i}| + |1-(1-\alpha)w_{i_{k-1}, j}/\degw{j}| \\
                &\quad\quad + \dist{i, j} -2
            \end{align*}
        Now we note that since $w_{i, i_2}/\degw{i} \leq 1$ and $w_{i_{k-1}, j}/\degw{j} \leq 1$ always, we have that \newline $1-(1-\alpha)w_{i, i_2}/\degw{i} \geq 0$ and $1-(1-\alpha)w_{i_{k-1}, j}/\degw{j}\geq 0$. Therefore we can simplify the preceding and obtain
            \begin{align*}
                \wass(m^{(\alpha)}_i, m^{(\alpha)}_j) &= \dist{i, j} + (1-\alpha)\left(\sum_{x\sim i}\frac{w_{ix} \sigma_{ix}}{\degw{i}} + \sum_{y\sim j}\frac{w_{jy}  \sigma_{jy}}{\degw{j}} \right)
            \end{align*}
        where for brevity we write $\sigma_{ix} = -1$ if $\{i, x\}$ is on the geodesic $\geodesic_{ij}$ and $\sigma_{ix} = 1$ otherwise.
    \end{proof}

    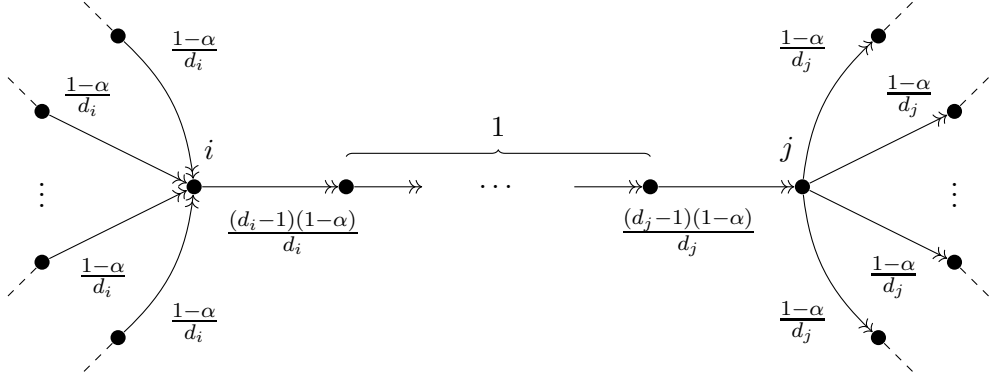
\begin{figure}[t!]  
        \begin{center}
            \begin{tikzpicture}[scale=2]
                \node (1) at  (-2, 0)[circle,fill,color=black,inner sep=2pt]{};
                \node (2) at  (-1, 0)[circle,fill,color=black,inner sep=2pt]{};
                \node (3) at  (1, 0)[circle,fill,color=black,inner sep=2pt]{};
                \node (4) at  (2, 0)[circle,fill,color=black,inner sep=2pt]{};
                
                \node (B1) at  (-2.5, 1)[circle,fill,color=black,inner sep=2pt]{};
                \node (B2) at  (-3, .5)[circle,fill,color=black,inner sep=2pt]{};
                \node (B3) at  (-3, -.5)[circle,fill,color=black,inner sep=2pt]{};
                \node (B4) at  (-2.5, -1)[circle,fill,color=black,inner sep=2pt]{};

                \node (C1) at  (2.5, 1)[circle,fill,color=black,inner sep=2pt]{};
                \node (C2) at  (3, .5)[circle,fill,color=black,inner sep=2pt]{};
                \node (C3) at  (3, -.5)[circle,fill,color=black,inner sep=2pt]{};
                \node (C4) at  (2.5, -1)[circle,fill,color=black,inner sep=2pt]{};

                \draw[color=black, ->>] (1) to (2) node[right]{};
                \draw[color=black, ->>] (2) to (-0.5, 0) node[right]{};
                \draw[color=black, <<-] (3) to (0.5, 0) node[right]{};
                \draw[color=black, ->>] (3) to (4) node[right]{};
                \node at (0, 0) {$\dots$};

                \node at (-1.9, 0.25) {$i$};
                \node at (1.9, 0.25) {$j$};

                \draw[color=black, ->>, bend left = 20] (B1) to (1) node[right]{};
                \draw[color=black, ->>] (B2) to (1) node[right]{};
                \draw[color=black, ->>] (B3) to (1) node[right]{};
                \draw[color=black, ->>, bend right = 20] (B4) to (1) node[right]{};
                \node at (-3, 0) {$\vdots$};

                \draw[color=black, <<-, bend right = 20] (C1) to (4) node[right]{};
                \draw[color=black, <<-] (C2) to (4) node[right]{};
                \draw[color=black, <<-] (C3) to (4) node[right]{};
                \draw[color=black, <<-, bend left = 20] (C4) to (4) node[right]{};
                \node at (3, 0) {$\vdots$};

                \node at (-2, .9) {$\frac{1-\alpha}{\degc{i}}$};
                \node at (-2.7, .6) {$\frac{1-\alpha}{\degc{i}}$};
                \node at (-2, -.9) {$\frac{1-\alpha}{\degc{i}}$};
                \node at (-2.6, -.6) {$\frac{1-\alpha}{\degc{i}}$};

                \node at (-1.35, -.3) {$\frac{(\degc{i}-1)(1-\alpha)}{\degc{i}}$};
                \node at (1.25, -.3) {$\frac{(\degc{j}-1)(1-\alpha)}{\degc{j}}$};

                \node at (2, .9) {$\frac{1-\alpha}{\degc{j}}$};
                \node at (2.7, .6) {$\frac{1-\alpha}{\degc{j}}$};
                \node at (2, -.9) {$\frac{1-\alpha}{\degc{j}}$};
                \node at (2.6, -.6) {$\frac{1-\alpha}{\degc{j}}$};

                \node at (0, 0.4) {$1$};
                \draw[decoration={brace,raise=0},decorate] (-1, 0.2) -- (1, .2);

                \draw[dashed] (B1) -- +(-0.25, 0.25);
                \draw[dashed] (B2) -- +(-0.25, 0.25);
                \draw[dashed] (B3) -- +(-0.25, -0.25);
                \draw[dashed] (B4) -- +(-0.25, -0.25);

                \draw[dashed] (C1) -- +(0.25, 0.25);
                \draw[dashed] (C2) -- +(0.25, 0.25);
                \draw[dashed] (C3) -- +(0.25, -0.25);
                \draw[dashed] (C4) -- +(0.25, -0.25);
            \end{tikzpicture}
        \end{center}        

        \caption{An illustration of the feasible flow between $m^{(\alpha)}_i$ and $m^{(\alpha)}_j$ in the case where $i, j$ are not adjacent. For simplicity we take the tree to be unweighted. The nodes $i, j$ are rendered left and right of center, respectively, and all other labels indicate edge flow values. The dotted line in the center denotes the geodesic between $i, j$ along which (other than the two end-edges), the flow is equal to one. Orientations are indicated locally with arrows, and all other orientations on edges are immaterial.}\label{fig:tree-transport-illustration-nonadjacent}
    \end{figure}

    We now have the following two immediate corollaries which give formulas for the transportation-based curvatures. 

    \begin{corollary}\label{thm:orc-curv}
        Let $T=(V, E, w)$ be a weighted tree and let $i,j\in V$ be fixed. We have that $\kaporc_{ij}$ can be expressed by the following formulas. First, if $\{i, j\}\in E$, we have
            \begin{align}\label{eq:orc-on-trees}
                \kaporc_{ij} &= \begin{cases}
                    1 - \sum_{\substack{x\sim i \\ x\neq j}}\frac{w_{ix}(1-\alpha)}{\degw{i}} - \sum_{\substack{y\sim j \\ y\neq i}}\frac{w_{jy}(1-\alpha)}{\degw{j}} \\
                    \quad\quad - |1-(1-\alpha)w_{ij}\left(1/\degw{i} + 1/\degw{j}\right)| &\text{ if } \alpha\in[0, 1/2)\\
                    - (1-\alpha)\left(\sum_{x\sim i}\frac{w_{ix} \sigma_{ix}}{\degw{i}} + \sum_{y\sim j}\frac{w_{jy} \sigma_{jy}}{\degw{j}} \right) &\text{ if }\alpha\geq 1/2
                \end{cases}.
            \end{align}
        If $i\nsim j$, we have
            \begin{align*}
                \kaporc_{ij} &= -\frac{1-\alpha}{\dist{i, j}}\left(\sum_{x\sim i}\frac{w_{ix} \sigma_{ix}}{\degw{i}} + \sum_{y\sim j}\frac{w_{jy}  \sigma_{jy}}{\degw{j}} \right),
            \end{align*}
        where for brevity in both cases we write $\sigma_{ix} = -1$ if $\{i, x\}$ is on the geodesic $\geodesic_{ij}$ and $\sigma_{ix} = 1$ otherwise.
    \end{corollary}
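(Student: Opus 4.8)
The plan is to derive both formulas by direct substitution of the Wasserstein expressions from \cref{thm:wass-on-trees} into the definition $\kaporc_{ij} = 1 - \wass(m_i^{(\alpha)}, m_j^{(\alpha)})/\dist{i, j}$ of \cref{defn:orc}, splitting into exactly the same two cases (adjacent versus non-adjacent) that appear there. There is essentially no new content to establish; all of the combinatorial and optimization work has already been done in \cref{prop:tree-beckmann} and \cref{thm:wass-on-trees}, and what remains is bookkeeping with the sign convention $\sigma_{ix}$.

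First, in the adjacent case $\{i, j\}\in E$ one has $\dist{i, j} = 1$, so $\kaporc_{ij} = 1 - \wass(m_i^{(\alpha)}, m_j^{(\alpha)})$. For $\alpha\in[0, 1/2)$ I would substitute the first branch of the piecewise formula verbatim, retaining the absolute value $|1-(1-\alpha)w_{ij}(1/\degw{i} + 1/\degw{j})|$ since, as noted in the proof of \cref{thm:wass-on-trees}, its sign genuinely depends on the interplay between the local weights and the choice of $\alpha$ and cannot be resolved in general. For $\alpha\ge 1/2$ I would instead substitute the already-simplified branch $\wass = 1 + (1-\alpha)\left(\sum_{x\sim i} w_{ix}\sigma_{ix}/\degw{i} + \sum_{y\sim j} w_{jy}\sigma_{jy}/\degw{j}\right)$; subtracting this from $1$ cancels the leading $1$ and negates the remaining sum, yielding the stated expression. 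In the non-adjacent case $i\nsim j$, I would substitute the single formula $\wass = \dist{i, j} + (1-\alpha)\left(\sum_{x\sim i} w_{ix}\sigma_{ix}/\degw{i} + \sum_{y\sim j} w_{jy}\sigma_{jy}/\degw{j}\right)$, divide through by $\dist{i, j}$, and again cancel the resulting $\dist{i, j}/\dist{i, j} = 1$ against the $1$ in the definition, leaving $-\frac{1-\alpha}{\dist{i, j}}(\cdots)$.

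The only point requiring any care—and it is not really an obstacle—is to keep the $\sigma$-convention consistent with \cref{thm:wass-on-trees} (in particular, in the adjacent case with $\alpha\ge 1/2$ the sum over $x\sim i$ includes $x = j$ with $\sigma_{ij} = -1$, which is precisely what recovers the $-(1-\alpha)w_{ij}(1/\degw{i}+1/\degw{j})$ contribution). One may optionally remark that the two adjacent-case branches agree at $\alpha = 1/2$, since $\alpha\ge 1/2$ forces $1-(1-\alpha)w_{ij}(1/\degw{i}+1/\degw{j})\ge 0$; this is the consistency check showing the piecewise formula is well-posed at the boundary, and it explains why \cref{fig:formulas} omits the $\alpha<1/2$ branch from the summary table purely on grounds of typeset length.
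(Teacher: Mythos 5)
Your proposal is correct and matches the paper's treatment exactly: the paper presents \cref{thm:orc-curv} as an immediate consequence of \cref{thm:wass-on-trees}, obtained precisely by substituting the Wasserstein formulas into \cref{defn:orc} with $\dist{i,j}=1$ in the adjacent case. Your additional remarks on the $\sigma$-convention and the consistency of the two branches at $\alpha=1/2$ are accurate and, if anything, slightly more explicit than the paper.
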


    Versions of~\cref{thm:orc-curv} have appeared in various settings; e.g.,~\cite{rubleva2016ricci} in the case of weighted shortest path distance and unweighted lazy random walk measures, or~\cite{jost2014ollivier} in the case of locally finite combinatorial trees. In these works and others on general graphs, the main approach to computing the Wasserstein distance has been to utilize the Kantorovich-Rubenstein duality to write~\cref{eq:coupling-form} as an optimization program over Lipschitz functions with bounded Lipschitz constant. Subsequent works such as~\cite{munch2019ollivier} and~\cite{bai2021thesum}, building on~\cite{bourne2018ollivier}, have achieved further simplification by reducing these programs to Lipschitz functions defined only on the neighborhoods of adjacent nodes.

    We state as a corollary the version of the preceding in the case where the tree $T$ is unweighted.

    \begin{corollary}\label{thm:orc-curv-combinatorial}
        Let $T=(V, E)$ be a combinatorial tree and let $i,j\in V$ be fixed. We have that $\kaporc_{ij}$ can be expressed by the following formulas. First, if $\{i, j\}\in E$, we have
            \begin{align}\label{eq:orc-on-trees-comb}
                \kaporc_{ij} &= \begin{cases}
                    2\alpha &\text{ if }\left(1/\degc{i} + 1/\degc{j}\right) > \frac{1}{1-\alpha} \\
                    2(1-\alpha)\left(\frac{1}{\degc{i}} +\frac{1}{\degc{j}} - 1\right) &\text{ if }\left(1/\degc{i} + 1/\degc{j}\right)\leq \frac{1}{1-\alpha}
                \end{cases}.
            \end{align}
        In particular if $\alpha\geq 1/2$, the latter case applies. If $i\nsim j$, we have
            \begin{align*}
                \kaporc_{ij} &= \frac{2(1-\alpha)}{\dist{i, j}}\left(\frac{1}{\degc{i}} +\frac{1}{\degc{j}} - 1\right).
            \end{align*}
    \end{corollary}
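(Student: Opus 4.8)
The plan is to deduce this statement directly from \cref{thm:orc-curv} by specializing to unit weights, so that $\degw{x}=\degc{x}$ for every $x\in V$ and $w_{ix}=1$ for every edge, and then simplifying the resulting sums by counting terms.

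First I would treat the adjacent case $\{i,j\}\in E$ with $\alpha\in[0,1/2)$. Substituting $w\equiv 1$ into the first branch of \cref{eq:orc-on-trees}, the sum over $x\sim i$ with $x\neq j$ consists of exactly $\degc{i}-1$ identical terms $\tfrac{1-\alpha}{\degc{i}}$, hence equals $(1-\alpha)\bigl(1-\tfrac{1}{\degc{i}}\bigr)$, and likewise for $j$. Collecting these gives
\begin{align*}
\kaporc_{ij} = 1-2(1-\alpha)+(1-\alpha)\Bigl(\tfrac{1}{\degc{i}}+\tfrac{1}{\degc{j}}\Bigr)-\Bigl|1-(1-\alpha)\Bigl(\tfrac{1}{\degc{i}}+\tfrac{1}{\degc{j}}\Bigr)\Bigr|.
\end{align*}
Writing $t=(1-\alpha)\bigl(\tfrac{1}{\degc{i}}+\tfrac{1}{\degc{j}}\bigr)$, this is $1-2(1-\alpha)+t-|1-t|$. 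If $t>1$, i.e.\ $\tfrac{1}{\degc{i}}+\tfrac{1}{\degc{j}}>\tfrac{1}{1-\alpha}$, then $|1-t|=t-1$ and the expression collapses to $2-2(1-\alpha)=2\alpha$; if $t\le 1$ then $|1-t|=1-t$ and it becomes $2t-2(1-\alpha)=2(1-\alpha)\bigl(\tfrac{1}{\degc{i}}+\tfrac{1}{\degc{j}}-1\bigr)$, which is precisely the two-case formula in \cref{eq:orc-on-trees-comb}.

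Next I would handle the adjacent case with $\alpha\ge 1/2$ using the $\sigma$-form of \cref{thm:orc-curv}. When $i\sim j$, the only edge incident to $i$ that lies on the geodesic $\geodesic_{ij}$ is $\{i,j\}$ itself, so among the $\degc{i}$ neighbors of $i$ exactly one has $\sigma_{ix}=-1$ and the remaining $\degc{i}-1$ have $\sigma_{ix}=+1$; hence $\sum_{x\sim i}\tfrac{\sigma_{ix}}{\degc{i}}=\tfrac{\degc{i}-2}{\degc{i}}=1-\tfrac{2}{\degc{i}}$, and symmetrically for $j$. Plugging in gives $\kaporc_{ij}=-(1-\alpha)\bigl(2-\tfrac{2}{\degc{i}}-\tfrac{2}{\degc{j}}\bigr)=2(1-\alpha)\bigl(\tfrac{1}{\degc{i}}+\tfrac{1}{\degc{j}}-1\bigr)$. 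Since $\alpha\ge 1/2$ forces $\tfrac{1}{1-\alpha}\ge 2\ge\tfrac{1}{\degc{i}}+\tfrac{1}{\degc{j}}$, this matches the second branch of \cref{eq:orc-on-trees-comb}, which justifies the ``in particular'' clause. The non-adjacent case $i\nsim j$ is identical in spirit: exactly one neighbor of $i$ (the second vertex along $\geodesic_{ij}$) contributes $\sigma_{ix}=-1$, so again $\sum_{x\sim i}\tfrac{\sigma_{ix}}{\degc{i}}=1-\tfrac{2}{\degc{i}}$, and after dividing by $\dist{i,j}$ the claimed formula drops out.

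There is essentially no difficult step; the only place warranting a little care is the absolute-value case split in the first branch, together with the observation that the two formulas obtained for $\kaporc_{ij}$ --- the $\alpha\ge 1/2$ expression and the $t\le 1$ sub-case of the $\alpha<1/2$ expression --- are mutually consistent, since both reduce to $2(1-\alpha)\bigl(\tfrac{1}{\degc{i}}+\tfrac{1}{\degc{j}}-1\bigr)$ exactly on the region $\tfrac{1}{\degc{i}}+\tfrac{1}{\degc{j}}\le\tfrac{1}{1-\alpha}$.
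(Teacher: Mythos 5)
Your proposal is correct and is exactly the specialization the paper intends: the paper states this corollary without proof as an immediate consequence of \cref{thm:orc-curv} under unit weights, and your term-counting, the absolute-value case split yielding $2\alpha$ versus $2(1-\alpha)(1/\degc{i}+1/\degc{j}-1)$, and the consistency check between the two branches supply precisely the omitted computation.
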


    By taking the limit of $\kaporc_{ij}/(1-\alpha)$ as $\alpha\rightarrow 1^-$, the condition $\alpha\geq 1/2$ can be eased and we obtain a straightforward characterization of the Lin-Lu-Yau curvature as well.

    \begin{corollary}\label{thm:lly-curv}
        Let $T=(V, E, w)$ be a weighted tree and let $i,j\in V$ be fixed. We have that $\kaplly_{ij}$ can be expressed by the following formula:
            \begin{align}\label{eq:lly-on-trees}
                \kaplly_{ij} &=  - \frac{1}{\dist{i, j}}\left(\sum_{x\sim i}\frac{w_{ix} \sigma_{ix}}{\degw{i}} + \sum_{y\sim j}\frac{w_{jy} \sigma_{jy}}{\degw{j}} \right)
            \end{align}
        where for brevity we write $\sigma_{ix} = -1$ if $\{i, x\}$ is on the geodesic $\geodesic_{ij}$ and $\sigma_{ix} = 1$ otherwise.
    \end{corollary}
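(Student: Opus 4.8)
The plan is to read the result directly off Definition~\ref{defn:lly} together with the explicit tree formulas of \cref{thm:orc-curv}: once those are in hand, this corollary is a single limit computation, so essentially all of the work has already been carried out.

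First I would restrict attention to $\alpha\in[1/2,1)$, since only the behaviour near $\alpha=1$ is relevant to the left limit. On this range the case split in the adjacent-edge formula of \cref{thm:orc-curv} disappears: as noted in the proof of \cref{thm:wass-on-trees}, one always has $w_{ij}\bigl(1/\degw{i}+1/\degw{j}\bigr)\le 2\le\tfrac{1}{1-\alpha}$, so for $\{i,j\}\in E$ the absolute value in \eqref{eq:orc-on-trees} resolves with a plus sign and
\[
\kaporc_{ij}=-(1-\alpha)\Bigl(\sum_{x\sim i}\tfrac{w_{ix}\sigma_{ix}}{\degw{i}}+\sum_{y\sim j}\tfrac{w_{jy}\sigma_{jy}}{\degw{j}}\Bigr).
\]
For $i\nsim j$ the formula of \cref{thm:orc-curv} already isolates the factor $1-\alpha$ explicitly.

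Next I would divide by $1-\alpha$. In both cases the quotient $\kaporc_{ij}/(1-\alpha)$ is then visibly independent of $\alpha$ throughout $[1/2,1)$: in the non-adjacent case it equals $-\tfrac{1}{\dist{i,j}}(\cdots)$ outright, and in the adjacent case it equals $-(\cdots)$, which coincides with the claimed expression after substituting $\dist{i,j}=1$. Since Lin, Lu, and Yau showed that $\alpha\mapsto\kaporc_{ij}/(1-\alpha)$ is non-decreasing and bounded on $[0,1)$, its left limit at $\alpha=1$ exists; being eventually constant, this limit equals the common value attained on $[1/2,1)$, which is exactly the formula in the statement. This completes the argument.

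There is no genuine obstacle here; the only point that deserves a sentence is the verification that the exceptional branch of \eqref{eq:orc-on-trees} — the regime in which the weighted laziness condition $w_{ij}\bigl(1/\degw{i}+1/\degw{j}\bigr)\le\tfrac{1}{1-\alpha}$ fails — is vacuous once $\alpha\ge 1/2$, which is precisely why the limit may be computed along the clean branch without any further bookkeeping.
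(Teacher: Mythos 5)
Your proposal is correct and matches the paper's route exactly: the paper states this corollary with only the remark that one divides $\kaporc_{ij}$ by $1-\alpha$ and takes the limit as $\alpha\to 1^-$, using the $\alpha\ge 1/2$ branch of \cref{thm:orc-curv} where the quotient is constant in $\alpha$. Your additional observations (that the exceptional branch is vacuous for $\alpha\ge 1/2$, and that the adjacent case agrees with the stated formula since $\dist{i,j}=1$) are just the details the paper leaves implicit.
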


    As before, this admits a combinatorial version which we state below.

    \begin{corollary}\label{thm:lly-curv-combinatorial}
        Let $T=(V, E)$ be a combinatorial tree and let $i,j\in V$ be fixed. We have that $\kaplly_{ij}$ can be expressed by the following formula:
            \begin{align}\label{eq:lly-on-trees-comb}
                \kaplly_{ij} &=  \frac{2}{\dist{i, j}}\left(\frac{1}{\degc{i}} + \frac{1}{\degc{j}} - 1\right).
            \end{align}
    \end{corollary}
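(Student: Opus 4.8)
The plan is to obtain this as a direct specialization of the weighted formula in \cref{thm:lly-curv}. Setting every edge weight equal to one, we have $\degw{x}=\degc{x}$ for all $x\in V$ and $w_{ix}=1$ for every edge $\{i,x\}$, so \cref{eq:lly-on-trees} collapses to
\[
    \kaplly_{ij} \;=\; -\frac{1}{\dist{i,j}}\left(\frac{1}{\degc{i}}\sum_{x\sim i}\sigma_{ix} \;+\; \frac{1}{\degc{j}}\sum_{y\sim j}\sigma_{jy}\right),
\]
and it remains only to evaluate the two signed sums over the neighbors of $i$ and of $j$.

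For this I would invoke the following elementary observation about the geodesic $\geodesic_{ij}=(i=i_1,i_2,\dotsc,i_k=j)$: since $T$ is a tree and (implicitly, as $\dist{i,j}$ sits in the denominator) $i\neq j$, this path is simple and of length at least one, so precisely one edge incident to $i$ lies on it, namely $\{i_1,i_2\}$, and precisely one edge incident to $j$ lies on it, namely $\{i_{k-1},i_k\}$. Hence among the $\degc{i}$ edges at $i$ exactly one has $\sigma_{ix}=-1$ and the other $\degc{i}-1$ have $\sigma_{ix}=+1$, so $\sum_{x\sim i}\sigma_{ix}=(\degc{i}-1)-1=\degc{i}-2$, and symmetrically $\sum_{y\sim j}\sigma_{jy}=\degc{j}-2$.

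Substituting these into the display above gives
\[
    \kaplly_{ij} \;=\; -\frac{1}{\dist{i,j}}\left(\frac{\degc{i}-2}{\degc{i}}+\frac{\degc{j}-2}{\degc{j}}\right) \;=\; -\frac{1}{\dist{i,j}}\left(2-\frac{2}{\degc{i}}-\frac{2}{\degc{j}}\right),
\]
which rearranges to \cref{eq:lly-on-trees-comb}. There is essentially no obstacle here; the only point needing (minimal) care is the counting claim that each of $i$ and $j$ is incident to exactly one edge of the geodesic, which is precisely what excludes the degenerate case $i=j$. Alternatively, one could bypass \cref{thm:lly-curv} entirely and argue from \cref{thm:orc-curv-combinatorial}: taking $\lim_{\alpha\to 1^-}\kaporc_{ij}/(1-\alpha)$, one notes that the inequality $1/\degc{i}+1/\degc{j}\le 1/(1-\alpha)$ holds for all $\alpha$ sufficiently close to $1$, so in every case only the branch $2(1-\alpha)(1/\degc{i}+1/\degc{j}-1)$ (resp. its non-adjacent analogue) survives the limit, again yielding \cref{eq:lly-on-trees-comb}.
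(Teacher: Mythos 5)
Your proof is correct and matches the paper's (implicit) argument: the paper states this corollary with no written proof, treating it exactly as you do — as the unit-weight specialization of \cref{thm:lly-curv}, where the only content is that exactly one edge at each of $i$ and $j$ lies on the geodesic, so the signed sums evaluate to $\degc{i}-2$ and $\degc{j}-2$. Your alternative route through \cref{thm:orc-curv-combinatorial} and the limit $\alpha\to 1^-$ is also sound and mirrors how the paper derives \cref{thm:lly-curv} itself.
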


    \section{Sums of distances on trees and Steinerberger curvature}\label{sec:distance}

    In this section we prove the following identity for shortest path distances on trees and use it to compute the Steinerberger curvature at each node. Note that this can also be obtained from the formula for the inverse of a tree distance matrix~\cite{graham1978distance}, but we offer a direct combinatorial proof for completeness.

    \begin{lemma}\label{lem:dist-identity}
        Let $T=(V, E)$ be a combinatorial tree and let $i\in V$ be fixed. Then we have the following identity:
            \begin{align*}
                2\sum_{j\in V} \dist{i, j} &= \frac{1}{2}\vol(G) + \sum_{j\in V } \dist{i, j}\degc{j}.
            \end{align*}
    \end{lemma}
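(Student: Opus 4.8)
The plan is to fix $i\in V$ and reduce the claim to the statement
\[
  \sum_{j\in V}\dist{i,j}\bigl(2-\degc{j}\bigr)=n-1,
\]
which is equivalent to the asserted identity because $\vol(G)=2|E|=2(n-1)$ for a combinatorial tree on $n=|V|$ vertices, so that $\tfrac12\vol(G)=n-1$. The engine of the argument is the observation that, since $T$ is a tree, $\dist{i,j}$ equals the number of edges on the geodesic $\geodesic_{ij}$, and an edge $e$ lies on $\geodesic_{ij}$ precisely when $T-e$ separates $i$ from $j$. Writing $C_e\subseteq V$ for the vertex set of the component of $T-e$ not containing $i$, this yields the indicator identity $\dist{i,j}=\sum_{e\in E}\mathbf{1}[\,j\in C_e\,]$ for every $j\in V$.

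First I would substitute this into both sums appearing in the reduced identity and exchange the order of summation:
\begin{align*}
  \sum_{j\in V}\dist{i,j}\,\degc{j}=\sum_{e\in E}\sum_{j\in C_e}\degc{j},
  \qquad
  \sum_{j\in V}\dist{i,j}=\sum_{e\in E}|C_e|.
\end{align*}
Next I would evaluate the inner sum for a fixed edge $e$: the subgraph of $T$ induced on $C_e$ is itself a tree, hence has exactly $|C_e|-1$ edges, and $e$ is the unique remaining edge of $T$ with exactly one endpoint in $C_e$. Counting edge–endpoints lying in $C_e$ therefore gives $\sum_{j\in C_e}\degc{j}=2(|C_e|-1)+1=2|C_e|-1$, where the crucial point is that the degrees are those of the ambient tree $T$, so the boundary edge $e$ contributes only a single endpoint.

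Finally, summing over $e\in E$ produces
\[
  \sum_{j\in V}\dist{i,j}\,\degc{j}
  =\sum_{e\in E}\bigl(2|C_e|-1\bigr)
  =2\sum_{e\in E}|C_e|-|E|
  =2\sum_{j\in V}\dist{i,j}-(n-1),
\]
and rearranging gives the lemma. The only step that is not pure bookkeeping is the subtree degree count $\sum_{j\in C_e}\degc{j}=2|C_e|-1$, and even that is a one-line handshake-lemma argument, so I expect no serious obstacle; as an alternative one could instead induct on $n$ by deleting a leaf $\ell\neq i$ and tracking how each side changes, but the edge-separation computation above is more transparent.
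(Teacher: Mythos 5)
Your proof is correct, and it takes a genuinely different route from the paper's. The paper proves the identity by constructing two explicit families of walks: $\mathcal{P}$, consisting of the out-and-back walks $\geodesic_{ij}+\geodesic_{ji}$ (total step count $2\sum_j \dist{i,j}$), and $\mathcal{Q}$, consisting of $\degc{j}$ copies of each geodesic $\geodesic_{ji}$ (total step count $\sum_j \dist{i,j}\degc{j}$); it then establishes the per-edge relation $N_e(\mathcal{Q})=N_e(\mathcal{P})-1$ via recursive identities on the rooted tree and an induction from the leaves upward. Your argument reaches the same per-edge fact in closed form: with $C_e$ the component of $T-e$ not containing $i$, the indicator identity $\dist{i,j}=\sum_{e\in E}\mathbf{1}[j\in C_e]$ and an exchange of summation give $\sum_j \dist{i,j}=\sum_e |C_e|$ and $\sum_j \dist{i,j}\degc{j}=\sum_e\sum_{j\in C_e}\degc{j}$, and the handshake-lemma count $\sum_{j\in C_e}\degc{j}=2(|C_e|-1)+1=2|C_e|-1$ (valid because $e$ is the unique edge of $T$ crossing the cut) finishes the computation. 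In effect your $\sum_{j\in C_e}\degc{j}$ and $2|C_e|$ are exactly the paper's $N_e(\mathcal{Q})$ and $N_e(\mathcal{P})$, but you evaluate both directly rather than through a recursion, so you avoid the induction entirely. What the paper's version buys is a picture (families of walkers traversing edges) that motivates the construction and generalizes readily to the weighted setting mentioned in \cref{rmk:steinerberger-weights}; what yours buys is brevity and a self-contained, induction-free derivation. Your weighted-volume bookkeeping ($\tfrac12\vol(G)=|E|=n-1$ for a combinatorial tree) is also correct.
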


    To establish this lemma we use a walk-counting argument which requires some setup and discussion before we write the main proof. Letting $i\in V$ be fixed, we begin by rooting the graph $T$ at node $i$ and organizing its nodes and edges with respect to the root. Specifically, for $j, k\in V$, write $j \prec_i k$ if $j$ appears in $\geodesic_{ik}$ (we identify only $\geodesic_{ii}$ as the ``loop'' $(i, i)$). Note that $(V, \prec_i)$ is a partially ordered set with maximal element $i$. Identify each edge $\{j, k\}\in E(T)$ with the ordered tuple $(j, k)$ where $j\prec_i k$ and write $\emode_i$ to denote the set of all such tuples. If we think of $T$ as being embedded in the plane with the root situated at the top and the remainder of the nodes and edges arranged in a downward-oriented fashion away from the root, the relation $j\prec_i k$ holds if $j$ appears on the branch connecting $i$ to $k$. See \cref{fig:rooted-tree-semil} for an illustration.

    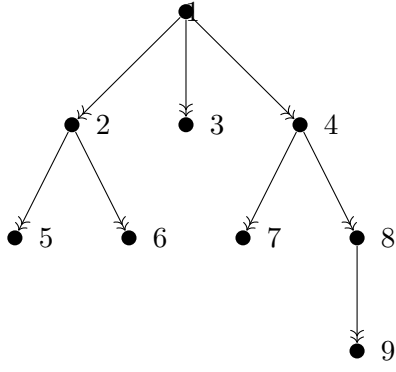
\begin{figure}[t!]
        \begin{center}
            \begin{tikzpicture}[scale=1.5]
                \node (0) at  (0, 0.3){\hspace{5pt}1};
                \node (1) at  (0, 0)[circle,fill,color=black,inner sep=2pt]{};
                \node (2) at (-1, -1)[circle,fill,color=black,inner sep=2pt]{};
                \node (3) at (0, -1)[circle,fill,color=black,inner sep=2pt]{};
                \node (4) at (1, -1)[circle,fill,color=black,inner sep=2pt]{};
                \node (5) at (-1.5, -2)[circle,fill,color=black,inner sep=2pt]{};
                \node (6) at (-0.5, -2)[circle,fill,color=black,inner sep=2pt]{};
                \node (7) at (0.5, -2)[circle,fill,color=black,inner sep=2pt]{};
                \node (8) at (1.5, -2)[circle,fill,color=black,inner sep=2pt]{};
                \node (9) at (1.5, -3)[circle,fill,color=black,inner sep=2pt]{};
    
                \draw[color=black,->>] (1) -- (2) node[right]{\hspace{5pt}2};
                \draw[color=black,->>] (1) -- (3) node[right]{\hspace{5pt}3};
                \draw[color=black,->>] (1) -- (4) node[right]{\hspace{5pt}4};
                \draw[color=black,->>] (2) -- (5) node[right]{\hspace{5pt}5};
                \draw[color=black,->>] (2) -- (6) node[right]{\hspace{5pt}6};
                \draw[color=black,->>] (4) -- (7) node[right]{\hspace{5pt}7};
                \draw[color=black,->>] (4) -- (8) node[right]{\hspace{5pt}8};
                \draw[color=black,->>] (8) -- (9) node[right]{\hspace{5pt}9};
            \end{tikzpicture}
        \end{center}
        \caption{An illustration of the poset $(V, \prec)$ for a tree on $9$ nodes and root $1$. Edge direction indicates orientation in $\emode_1$. For example, $1\prec_1 2$, $4\prec_1 9$, but $2\nprec_1 8$. 
        }\label{fig:rooted-tree-semil}
    \end{figure}

    If $\mathcal{P} = \{P^\alpha\}_{\alpha\in\mathscr{A}}$ is any collection of finite-length walks on the edges of a graph $G$, and $e\in E(G)$, we write
        \begin{align*}
            N_e(\mathcal{P}) &= \sum_{\alpha\in\mathscr{A}} \#\left\{(\ell, \ell+1) : e= (P^\alpha_\ell, P^\alpha_{\ell+1})\right\}
        \end{align*}
    where $\#\{\cdot\}$ indicates set cardinality and $e= (P_\ell, P_{\ell+1})$ is said to hold if $e = \{P_\ell, P_{\ell+1}\}$. We can think of $N_e(\mathcal{P})$ as the number of times that an edge is traversed in the family of walks $\mathcal{P}$. For this paper we will always have that $\# \mathcal{P} <\infty$ so $N_e(\mathcal{P})<\infty$. Define the step count of $\mathcal{P}$ by the expression
        \begin{align}\label{eq:step-count-defn}
            W(\mathcal{P}) &= \sum_{e\in E(G)} N_e(\mathcal{P}).
        \end{align}

    \begin{proof}[Proof of \cref{lem:dist-identity}]
        Assume without loss of generality that $i=1$, and let $\prec$ denote the relation $\prec_1$. For each $j\in V$ with $j\neq 1$, construct a walk $P^j$ by writing $P^j = \geodesic_{ij} + \geodesic_{ji}$ where $+$ indicates concatenation. That is, the walk traverses the geodesic from $i$ to $j$ and then again in reverse. Write $\mathcal{P} = \{P^j\}_{j\neq 1}$. Note upon inspection the immediate identity
            \begin{align}\label{eq:weight-p}
                W(\mathcal{P}) &= \sum_{j\neq 1} 2\dist{i, j},
            \end{align}
        since each geodesic is covered exactly twice and thus contributes its weight as such to the sum $W(\mathcal{P})$. Note also the slightly less obvious recursive identity for $(j, k)\in \emode_1$:
            \begin{align}\label{eq:recursion-p}
                N_{(j, k)}(\mathcal{P}) &= 2 + \sum_{\substack{\ell\sim k \\ k\prec \ell}}N_{(k, \ell)}(\mathcal{P}),
            \end{align}
        This follows from the observation that $(j, k)$ can occur as a step in a walk $P\in\mathcal{P}$ in one of two ways: either $P$ terminates at $k$ before reversing (that is, $P=P^k$), or $P$ proceeds past $k$ and terminates at some $\ell \succ k$ before turning around. If the walk proceeds through $k$, then it must pass through some edge $(k, \ell)$ for $\ell\sim k$, with $(k, \ell) \succ (j, k)$ before reversing as well. We illustrate the collection $\mathcal{P}$ and its edge counts $N_e$ in \cref{fig:walk-1-illustration}. 

        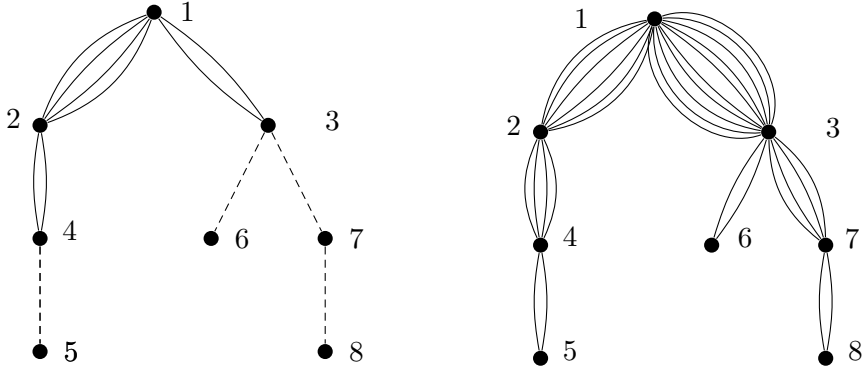
\begin{figure}[t!]\centering

            \begin{subfigure}[t!]{0.4\textwidth}
                \begin{center}
                    \begin{tikzpicture}[scale=1.5]
                        \node (0) at  (0, 0){\hspace{25pt}1};
                        \node (1) at  (0, 0)[circle,fill,color=black,inner sep=2pt]{};
                        \node (2) at  (-1, -1)[circle,fill,color=black,inner sep=2pt]{};
                        \node (3) at  (1, -1)[circle,fill,color=black,inner sep=2pt]{};
                        \node (4) at  (-1, -2)[circle,fill,color=black,inner sep=2pt]{};
                        \node (5) at  (-1, -3)[circle,fill,color=black,inner sep=2pt]{};
                        \node (6) at  (0.5, -2)[circle,fill,color=black,inner sep=2pt]{};
                        \node (7) at  (1.5, -2)[circle,fill,color=black,inner sep=2pt]{};
                        \node (8) at  (1.5, -3)[circle,fill,color=black,inner sep=2pt]{};
        
                        \draw[color=black, bend right = 10] (2) to (4) node[right]{\hspace{5pt}4};
                        \draw[color=black, bend right = 10] (4) to (2) node[right]{};
    
                        \draw[color=black, bend right = 10] (1) to (2) node[right]{\hspace{-25pt}2};
                        \draw[color=black, bend right = 10] (2) to (1) node[right]{};
                        \draw[color=black, bend right = 25] (1) to (2) node[right]{};
                        \draw[color=black, bend right = 25] (2) to (1) node[right]{};
    
                        \draw[color=black, bend right = 10] (1) to (3) node[right]{\hspace{20pt}3};
                        \draw[color=black, bend right = 10] (3) to (1) node[right]{};
        
                        \draw[color=black, densely dashed] (4) to (5) node[right]{\hspace{5pt}5};
                        \draw[color=black, densely dashed] (7) to (8) node[right]{\hspace{5pt}8};
                        \draw[color=black, densely dashed] (3) to (7) node[right]{\hspace{5pt}7};
                        \draw[color=black, densely dashed] (3) to (6) node[right]{\hspace{5pt}6};
                        \draw[color=black, densely dashed] (4) to (5) node[right]{\hspace{5pt}5};
    
                    \end{tikzpicture}
                \end{center}
            \end{subfigure}
            \begin{subfigure}[t!]{0.4\textwidth}
                \begin{center}
                    \begin{tikzpicture}[scale=1.5]
                        \node (0) at  (0, 0){\hspace{-55pt}1};
                        \node (1) at  (0, 0)[circle,fill,color=black,inner sep=2pt]{};
                        \node (2) at  (-1, -1)[circle,fill,color=black,inner sep=2pt]{};
                        \node (3) at  (1, -1)[circle,fill,color=black,inner sep=2pt]{};
                        \node (4) at  (-1, -2)[circle,fill,color=black,inner sep=2pt]{};
                        \node (5) at  (-1, -3)[circle,fill,color=black,inner sep=2pt]{};
                        \node (6) at  (0.5, -2)[circle,fill,color=black,inner sep=2pt]{};
                        \node (7) at  (1.5, -2)[circle,fill,color=black,inner sep=2pt]{};
                        \node (8) at  (1.5, -3)[circle,fill,color=black,inner sep=2pt]{};
        
                        \draw[color=black, bend right = 10] (7) to (8) node[right]{\hspace{5pt}8};
                        \draw[color=black, bend right = 10] (8) to (7) node[right]{};
        
                        \draw[color=black, bend right = 10] (4) to (5) node[right]{\hspace{5pt}5};
                        \draw[color=black, bend right = 10] (5) to (4) node[right]{};
        
                        \draw[color=black, bend right = 10] (2) to (4) node[right]{\hspace{5pt}4};
                        \draw[color=black, bend right = 10] (4) to (2) node[right]{};
                        \draw[color=black, bend right = 25] (2) to (4) node[right]{};
                        \draw[color=black, bend right = 25] (4) to (2) node[right]{};
        
                        \draw[color=black, bend right = 10] (3) to (7) node[right]{\hspace{5pt}7};
                        \draw[color=black, bend right = 10] (7) to (3) node[right]{};
                        \draw[color=black, bend right = 25] (3) to (7) node[right]{};
                        \draw[color=black, bend right = 25] (7) to (3) node[right]{};
        
                        \draw[color=black, bend right = 10] (1) to (2) node[right]{\hspace{-25pt}2};
                        \draw[color=black, bend right = 10] (2) to (1) node[right]{};
                        \draw[color=black, bend right = 25] (1) to (2) node[right]{};
                        \draw[color=black, bend right = 25] (2) to (1) node[right]{};
                        \draw[color=black, bend right = 35] (1) to (2) node[right]{};
                        \draw[color=black, bend right = 35] (2) to (1) node[right]{};
                        
                        \draw[color=black, bend right = 10] (3) to (6) node[right]{\hspace{5pt}6};
                        \draw[color=black, bend right = 10] (6) to (3) node[right]{};
        
                        \draw[color=black, bend right = 10] (1) to (3) node[right]{\hspace{20pt}3};
                        \draw[color=black, bend right = 10] (3) to (1) node[right]{};
                        \draw[color=black, bend right = 20] (1) to (3) node[right]{};
                        \draw[color=black, bend right = 20] (3) to (1) node[right]{};
                        \draw[color=black, bend right = 35] (1) to (3) node[right]{};
                        \draw[color=black, bend right = 35] (3) to (1) node[right]{};
                        \draw[color=black, bend right = 50] (1) to (3) node[right]{};
                        \draw[color=black, bend right = 50] (3) to (1) node[right]{};
                        \draw[color=black, bend right = 65] (1) to (3) node[right]{};
                        \draw[color=black, bend right = 65] (3) to (1) node[right]{};
        
                    \end{tikzpicture}
                \end{center}
            \end{subfigure}
    
            \caption{(\textbf{left}) An illustration of the collection $\mathcal{P}$ from the proof of \cref{lem:dist-identity} on a tree with eight nodes and root $1$. Only the walks $P^2, P^3, P^4$ are shown. Repeated lines indicate the number of times a particular edge appears in the collection $\mathcal{P}$, i.e., $N_e(\mathcal{P})$. Dashed lines indicate unused edges which are taken to be in the surrounding tree. (\textbf{right}) All walks $P^j$ with $j\neq 1$ are shown. }\label{fig:walk-1-illustration}
        \end{figure}
    
        Now we construct a separate collection of walks $\mathcal{Q}$ as follows. For $j\in V$ with $j\neq 1$, and $s=1,2,\dotsc, \degc{j}$, let $Q^{j, s}$ be an identical copy of the geodesic $\geodesic_{j1}$. Write $\mathcal{Q} = \{Q^{j, s}\}_{j, s}$. Note upon inspection the immediate identity
            \begin{align}\label{eq:weight-q}
                W(\mathcal{Q}) &= \sum_{j\neq 1} \dist{i, j} \degc{j}
            \end{align}
        since each geodesic $\geodesic_{j1}$ is covered exactly $\degc{j}$ times and thus contributes its weight as such to the sum $W(\mathcal{Q})$. Note also the slightly less obvious recursive identity for $(j, k)\in \emode_1$:
            \begin{align}\label{eq:recursion-q}
                N_{(j, k)}(\mathcal{Q}) &= \degc{k} + \sum_{\substack{\ell\sim k \\ k\prec \ell}}N_{(k, \ell)}(\mathcal{Q}),
            \end{align}
        This equation holds for the following reason. An edge $(j, k)$ is traversed either by one of the $\degc{k}$ walks instantiated at $k$ going through $j$, or by a walker instantiated at some $\ell \succ k$ going through $(j, k)$. If $\ell$ is such that $(k, \ell)\in\emode_i$, then the number of individuals going through $(k, \ell)$ and then $(j, k)$ is $N_{(k, \ell)}(\mathcal{Q})$. We illustrate the collection $\mathcal{Q}$ and its edge counts $N_e$ in \cref{fig:walk-2-illustration}. 

        \begin{figure}[t!]\centering
            \begin{subfigure}[t!]{0.4\textwidth}
                \begin{center}
                    \begin{tikzpicture}[scale=1.5]
                        \node (0) at  (0, 0){\hspace{25pt}1};
                        \node (1) at  (0, 0)[circle,fill,color=black,inner sep=2pt]{};
                        \node (2) at  (-1, -1)[circle,fill,color=black,inner sep=2pt]{};
                        \node (3) at  (1, -1)[circle,fill,color=black,inner sep=2pt]{};
                        \node (4) at  (-1, -2)[circle,fill,color=black,inner sep=2pt]{};
                        \node (5) at  (-1, -3)[circle,fill,color=black,inner sep=2pt]{};
                        \node (6) at  (0.5, -2)[circle,fill,color=black,inner sep=2pt]{};
                        \node (7) at  (1.5, -2)[circle,fill,color=black,inner sep=2pt]{};
                        \node (8) at  (1.5, -3)[circle,fill,color=black,inner sep=2pt]{};
        
                        \draw[color=black, bend right = 10] (1) to (2) node[right]{\hspace{5pt}2};
                        \draw[color=black, bend right = 10] (2) to (1) node[right]{};
                        \draw[color=black, bend right = 25] (1) to (2) node[right]{};
                        \draw[color=black, bend right = 25] (2) to (1) node[right]{};

                        \draw[color=black, bend right = 10] (2) to (4) node[right]{\hspace{5pt}4};
                        \draw[color=black, bend right = 10] (4) to (2) node[right]{};

                        \draw[color=black, bend right = 10] (1) to (3) node[right]{\hspace{5pt}3};
                        \draw[color=black, bend right = 10] (3) to (1) node[right]{};
                        \draw[color=black, bend right = 25] (1) to (3) node[right]{};

                        \draw[color=black, densely dashed] (4) to (5) node[right]{\hspace{5pt}5};
                        \draw[color=black, densely dashed] (7) to (8) node[right]{\hspace{5pt}8};
                        \draw[color=black, densely dashed] (3) to (7) node[right]{\hspace{5pt}7};
                        \draw[color=black, densely dashed] (3) to (6) node[right]{\hspace{5pt}6};
                        \draw[color=black, densely dashed] (4) to (5) node[right]{\hspace{5pt}5};
    
                    \end{tikzpicture}
                \end{center}
            \end{subfigure}
            \begin{subfigure}[t!]{0.4\textwidth}
                \begin{center}
                    \begin{tikzpicture}[scale=1.5]
                        \node (0) at  (0, 0){\hspace{-55pt}1};
                        \node (1) at  (0, 0)[circle,fill,color=black,inner sep=2pt]{};
                        \node (2) at  (-1, -1)[circle,fill,color=black,inner sep=2pt]{};
                        \node (3) at  (1, -1)[circle,fill,color=black,inner sep=2pt]{};
                        \node (4) at  (-1, -2)[circle,fill,color=black,inner sep=2pt]{};
                        \node (5) at  (-1, -3)[circle,fill,color=black,inner sep=2pt]{};
                        \node (6) at  (0.5, -2)[circle,fill,color=black,inner sep=2pt]{};
                        \node (7) at  (1.5, -2)[circle,fill,color=black,inner sep=2pt]{};
                        \node (8) at  (1.5, -3)[circle,fill,color=black,inner sep=2pt]{};
        
                        \draw[color=black, bend right = 0] (7) to (8) node[right]{\hspace{5pt}8};
        
                        \draw[color=black, bend right = 0] (4) to (5) node[right]{\hspace{5pt}5};
        
                        \draw[color=black, bend right = 10] (2) to (4) node[right]{\hspace{5pt}4};
                        \draw[color=black, bend right = 10] (4) to (2) node[right]{};
                        \draw[color=black, bend right = 25] (2) to (4) node[right]{};
        
                        \draw[color=black, bend right = 10] (3) to (7) node[right]{\hspace{5pt}7};
                        \draw[color=black, bend right = 10] (7) to (3) node[right]{};
                        \draw[color=black, bend right = 25] (3) to (7) node[right]{};
        
                        \draw[color=black, bend right = 10] (1) to (2) node[right]{\hspace{-25pt}2};
                        \draw[color=black, bend right = 10] (2) to (1) node[right]{};
                        \draw[color=black, bend right = 25] (1) to (2) node[right]{};
                        \draw[color=black, bend right = 25] (2) to (1) node[right]{};
                        \draw[color=black, bend right = 35] (1) to (2) node[right]{};
                        
                        \draw[color=black, bend right = 0] (3) to (6) node[right]{\hspace{5pt}6};
        
                        \draw[color=black, bend right = 10] (1) to (3) node[right]{\hspace{20pt}3};
                        \draw[color=black, bend right = 10] (3) to (1) node[right]{};
                        \draw[color=black, bend right = 20] (1) to (3) node[right]{};
                        \draw[color=black, bend right = 20] (3) to (1) node[right]{};
                        \draw[color=black, bend right = 35] (1) to (3) node[right]{};
                        \draw[color=black, bend right = 35] (3) to (1) node[right]{};
                        \draw[color=black, bend right = 50] (1) to (3) node[right]{};
                        \draw[color=black, bend right = 50] (3) to (1) node[right]{};
                        \draw[color=black, bend right = 65] (1) to (3) node[right]{};
        
                    \end{tikzpicture}
                \end{center}
            \end{subfigure}
    
            \caption{(\textbf{left}) An illustration of the collection $\mathcal{Q}$ from the proof of \cref{lem:dist-identity} on a tree with eight nodes and root $1$. Only the walks $\{Q^{2, s}\}_{s=1, 2}, \{Q^{4, s}\}_{s=1, 2}, \{Q^{3, s}\}_{s=1, 2, 3}$ are shown. Repeated lines indicate the number of times an edge is traversed by the walks shown, i.e., with repetitions equal to $N_e(\mathcal{Q})$. Dashed lines indicate unused edges which are taken to be in the surrounding tree. (\textbf{right}) All walks in $\mathcal{Q} = \{Q^{j, s}\}_{j, s}$ are shown. }\label{fig:walk-2-illustration}
        \end{figure}
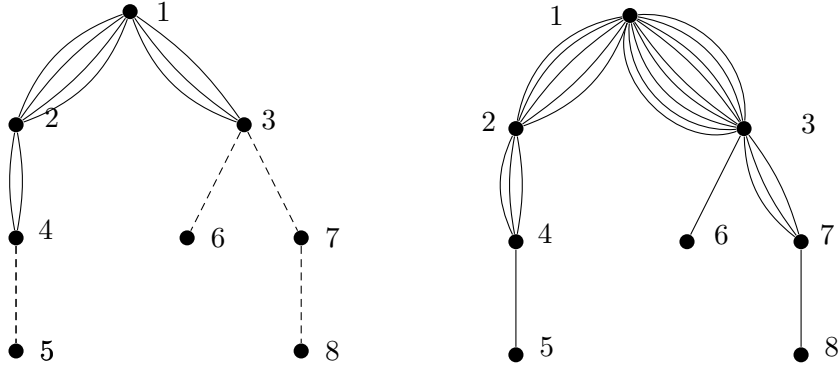

        We now claim that for each $e\in\emode_i$, it holds
            \begin{align}\label{eq:nep-neq}
                N_{e}(\mathcal{Q}) = N_{e}(\mathcal{P}) - 1.
            \end{align}
        If $e$ is a leaf edge, $N_{e}(\mathcal{P})=2$ and $N_{e}(\mathcal{Q}) = 1$ by construction. Assume $e = (j,k)\in\emode_i$ is not a leaf edge and that \cref{eq:nep-neq} holds for each $f\in \emode_i$ such that $e\prec f$. Then by induction and \cref{eq:recursion-q}, we have
            \begin{align*}
                N_{(j, k)}(\mathcal{Q}) &= \degc{k} +  \sum_{\substack{\ell\sim k \\ k\prec \ell}} N_{(k,\ell)}(\mathcal{Q})\\
                &= \degc{k} +  \sum_{\substack{\ell\sim k \\ k\prec \ell}} (N_{(k,\ell)}(\mathcal{P}) - 1)\\
                &= 1 +  \sum_{\substack{\ell\sim k \\ k\prec \ell}} N_{(k,\ell)}(\mathcal{P}) \\
                &= N_{(j, k)}(\mathcal{P}) - 1
            \end{align*}
        where the last equality follows from \cref{eq:recursion-p}. Therefore we have, by definition, \cref{eq:weight-p}, and \cref{eq:weight-q}, that
            \begin{align*}
                \sum_{j\neq 1} 2\dist{i, j} &= W(\mathcal{P})\\
                &= \sum_{e\in \emode_i} N_e(\mathcal{P}) \\
                &= \sum_{e\in \emode_i} (N_e(\mathcal{Q}) + 1)\\
                &= \frac{1}{2}\vol(G) + \sum_{j\neq 1} \dist{i, j} \degc{j}.
            \end{align*}
        The claim follows.
    \end{proof}

    The following result concerning $\kapdis_i$ can be obtained as a fairly straightforward consequence of \cref{lem:dist-identity}.

    \begin{corollary}\label{cor:kapd}
        Let $T=(V, E)$ be a combinatorial tree and $i\in V$. Then $\kapdis$ is given by
            \begin{align}
                \kapdis_i &= \frac{2n(2-\degc{i})}{\vol(G)} = \frac{n}{n-1}(2-\degc{i})
            \end{align}
    \end{corollary}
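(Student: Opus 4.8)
The plan is to verify directly that the vector with entries $\frac{n}{n-1}(2-\degc{i})$ solves the defining linear system $\dmat\kapdis = n\mathbf{1}$, and then invoke uniqueness. Recall from the discussion following \cref{defn:steiner} that for a tree the distance matrix $\dmat$ is nonsingular, so $\dmat^{+} = \dmat^{-1}$ and $\kapdis$ is the \emph{unique} vector satisfying $\dmat\kapdis = n\mathbf{1}$; equivalently, $\sum_{j\in V}\dist{i, j}\,\kapdis_j = n$ for every $i\in V$.

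First I would record the elementary identity $\vol(T) = \sum_{k\in V}\degc{k} = 2|E| = 2(n-1)$, valid for any combinatorial tree since a tree on $n$ nodes has exactly $n-1$ edges. Then, fixing $i\in V$ and setting $\kappa_j := \frac{n}{n-1}(2-\degc{j})$, I would compute
    \begin{align*}
        \sum_{j\in V}\dist{i, j}\,\kappa_j &= \frac{n}{n-1}\left(2\sum_{j\in V}\dist{i, j} - \sum_{j\in V}\dist{i, j}\degc{j}\right).
    \end{align*}
By \cref{lem:dist-identity}, the quantity in parentheses equals $\tfrac{1}{2}\vol(T) = n-1$ (the $j=i$ term is harmless on both sides since $\dist{i,i}=0$, so the sum over all of $V$ matches the lemma). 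Hence $\sum_{j\in V}\dist{i, j}\,\kappa_j = n$. Since $i\in V$ was arbitrary, $\dmat\kappa = n\mathbf{1}$, so $\kapdis = \kappa = \dmat^{+}(n\mathbf{1})$ by nonsingularity and uniqueness, giving $\kapdis_i = \frac{n}{n-1}(2-\degc{i})$.

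Finally I would note that the two displayed expressions coincide: substituting $\vol(T) = 2(n-1)$ into $\frac{2n(2-\degc{i})}{\vol(T)}$ yields $\frac{n}{n-1}(2-\degc{i})$. I do not expect a genuine obstacle here; the argument is a one-line reduction to \cref{lem:dist-identity}, and the only points requiring a moment's care are applying that lemma with the summation index ranging over all of $V$ and using nonsingularity of $\dmat$ on trees to pass from ``a solution'' of $\dmat\kapdis = n\mathbf{1}$ to ``the solution'' $\dmat^{+}(n\mathbf{1})$.
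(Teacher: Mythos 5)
Your proposal is correct and follows essentially the same route as the paper: both verify via \cref{lem:dist-identity} that the candidate vector satisfies $\dmat\kappa = n\mathbf{1}$ and then invoke nonsingularity of the tree distance matrix to conclude uniqueness. The only cosmetic difference is that you substitute $\vol(T)=2(n-1)$ up front while the paper keeps $\vol(G)$ symbolic until the end.
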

    
    \begin{proof}
        Let $i\in V$ be fixed. Observe by \cref{lem:dist-identity} that
            \begin{align*}
                \sum_{j\in V}\dist{i, j} (2-\degc{j}) &= \frac{1}{2}\vol(G).
            \end{align*}
        Therefore, $\kappa_j = \frac{2n(2-\degc{j})}{\vol(G)}$ satisfies
            \begin{align}\label{eq:kap-dist-tree}
                \mathcal{D}\kappa = n\mathbf{1}
            \end{align}
        as desired. On trees, the distance matrix $\mathcal{D}$ is nonsingular, so the solution $\kappa$ to \cref{eq:kap-dist-tree} is uniquely determined and the claim follows.
    \end{proof}
    
    \begin{remark}\label{rmk:steinerberger-weights}
        \cref{lem:dist-identity} can be extended to weighted trees if the shortest path metric $\Delta$ takes edge weights into account (via the path of least total weight). The proof amounts to a minor adjustment of the one presented herein (namely, by modifying~\cref{eq:step-count-defn}). In this setting, the shortest path distance matrix and Steinerberger curvature take on weighted variants. However, $\kapdis_i = \frac{2n(2-\degc{i})}{\vol(G)}$ would continue to hold. Thus in such a setting, $\kapdis_i$ would change only by a global scaling factor regardless of the weighting of the tree.
    \end{remark}

    \section{Proofs from the Introduction}\label{sec:comparisons-proof}

    \begin{proof}[Proof of~\cref{thm:comparison-1}]
        By~\cref{thm:lly-curv-combinatorial} and~\cref{thm:orc-curv-combinatorial}, the Lin--Lu--Yau curvature is given by the formula
            \begin{align*}
                \kaplly_{ij}&=2\left(\frac{1}{\degc{i}}+\frac{1}{\degc{j}}-1\right)\\
                &=\frac{1}{(1-\alpha)}\kaporc_{ij},
            \end{align*}
        from which the first part of~\cref{eq:comparison-1} follows. By~\cref{cor:kapd}, we have
            \begin{align*}
                \kapdis_{i} &= \frac{n}{n-1}\,(2-\degc{i}), \quad i\in V.
            \end{align*}
        Therefore
            \begin{align*}
                \frac{n-1}{n}\left(\frac{\kapdis_{i}}{\degc{i}}+\frac{\kapdis_{j}}{\degc{j}}\right)
                &=\frac{n-1}{n}\left(\frac{1}{\degc{i}}\cdot\frac{n}{n-1}(2-\degc{i})
                +\frac{1}{\degc{j}}\cdot\frac{n}{n-1}(2-\degc{j})\right)\\
                &=\frac{2-\degc{i}}{\degc{i}}+\frac{2-\degc{j}}{\degc{j}}
                =\left(\frac{2}{\degc{i}}-1\right)+\left(\frac{2}{\degc{j}}-1\right)\\
                &=2\left(\frac{1}{\degc{i}}+\frac{1}{\degc{j}}-1\right)
                =\kaplly_{ij}.
            \end{align*}
        The claim follows.
    \end{proof}

    \begin{proof}[Proof of~\cref{thm:comparison}]
        For \textit{(i)}, since $\{i, j\}$ is not a leaf edge, $\degc{i}, \degc{j} \geq 2$ and thus in particular $1/\degc{i} + 1/\degc{j} \leq 1 \leq \frac{1}{1-\alpha}$, so that the latter case of \cref{eq:orc-on-trees-comb} holds. Thus since $1-\alpha > 0$ as well,
            \begin{align*}
                \kaplly_{ij} &= 2\left(1/\degc{i}+1/\degc{j}-1\right)\\
                &\leq 2(1-\alpha)\left(1/\degc{i}+1/\degc{j}-1\right) = \kaporc_{ij}
            \end{align*}
        Next we have that for $x=i, j$, it holds
            \begin{align*}
                2\left(1/\degc{i}+1/\degc{j}-1\right) &> 2\left(1/\degc{x}-1\right)\\
                &= \frac{1}{\degc{x}} \left(2-2\degc{x} \right)\\
                &= \frac{2}{\degc{x}} \left(2-\degc{x} -1 \right)\\
                &\geq \frac{4}{\degc{x}} \left(\kapdis_{x} -\frac{1}{2}\right)
            \end{align*}
        For \textit{(ii)} we note first that by assumption,
            \begin{align*}
                \kaporc_{ij} &= 2(1-\alpha)\left(1/\degc{i}+1/\degc{j}-1\right) \\
                &= \frac{2(1-\alpha)}{\degc{j}}\\
                &\leq \frac{2}{\degc{j}} = \kaplly_{ij}
            \end{align*}
        Then we have that since $\degc{j}\geq 2$, it holds
            \begin{align*}
                \kaplly_{ij}&= \frac{2}{\degc{j}} \leq \frac{8}{3} \kapdis_i.
            \end{align*}
    \end{proof}

    \begin{proof}[Proof of \cref{thm:reverse-bm}]
        We have that
            \begin{align}\label{eq:reverse-bm-eq}
                \| n\mathbf{1}\|_\infty = \|\mathcal{D}\kapdis\|_\infty &\leq \|\mathcal{D}\|_{\ell_1\rightarrow \ell_\infty} \|\kapdis\|_1.
            \end{align}
        Here, $\|\mathcal{D}\|_{\ell_1\rightarrow \ell_\infty}$ denotes the norm of the matrix $\mathcal{D}$ as an operator from the normed space $(\mathbb{R}^n, \|\cdot\|_1)$ into the normed space  $(\mathbb{R}^n, \|\cdot\|_\infty)$. We recall the well-known fact that this norm is given by the entrywise maximum absolute value of the matrix, which in this case is $D$. Thus \cref{eq:reverse-bm-eq} reads
            \begin{align*}
                n \leq D\|\kapdis\|_1
            \end{align*}
        from which the claim follows.
    \end{proof}

    \section*{Statements \& Declarations}

    \subsection*{Acknowledgements}

    The author wishes to acknowledge Stefan Steinerberger for helpful conversations as well as the anonymous referee whose suggestions have greatly improved the paper.

    \subsection*{Financial Disclosures}

    The author wishes to acknowledge financial support from the Hal\i{}c\i{}o\u{g}lu Data Science Institute through its Graduate Prize Fellowship.  There are no other relevant financial or non-financial competing interests to disclose.
    
    \subsection*{Data Availability Statement}

    The author declares that the data supporting the findings of this study are available within the paper or at the publicly available repository \href{https://github.com/sawyer-jack-1/curvature-on-trees}{https://github.com/sawyer-jack-1/curvature-on-trees}.



\end{document}